\documentclass[12pt]{article}
\usepackage[OT2,OT1]{fontenc}
\usepackage{amsfonts,amsmath, amssymb,latexsym}
\usepackage[all,cmtip]{xy}
\usepackage{float}
\usepackage[stable]{footmisc}
\usepackage[title]{appendix}

\setlength{\textheight}{8.8884in}
\setlength{\textwidth}{6.5in}
\setlength{\topmargin}{0.0in}
\setlength{\headheight}{0.0in}
\setlength{\headsep}{0.0in}
\setlength{\leftmargin}{0.0in}
\setlength{\oddsidemargin}{0.0in}
\setlength{\parindent}{3pc}

\def\Z{{\mathbb Z}}

\def\SL{{\rm SL}}
\def\GL{{\rm GL}}
\def\SO{{\rm SO}}
\def\PGL{{\rm PGL}}

\def\inv{{\rm inv}}

\def\Sym{{\rm Sym}}

\def\Cl{{\rm Cl}}

\def\OO{{\mathcal O}}
\def\OO{{\mathcal O}}

\def\Disc{{\rm Disc}}
\def\Disc{{\rm Disc}}
\def\Det{{\rm Det}}
\def\det{{\rm det}}

\def\r{{\rm r}}

\def\Tr{{\rm Tr}}

\def\Q{{\mathbb Q}}

\def\Z{{\mathbb Z}}

\def\Q{{\mathbb Q}}

\def\cR{{\mathcal R}}

\def\wzn2{{W_{\Z,+}^{(2-)}}}

\def\fz1{{F_{\Z,1}}}

\newtheorem{theorem}{Theorem}[section]

\newtheorem{corollary}[theorem]{Corollary}

\newtheorem{definition}[theorem]{Definition}

\newenvironment{proof}{\noindent {\bf Proof:}}{$\Box$ \vspace{2 ex}}

\begin{document}

\title{A quartic order is at most 20,000 times monogenic} 
\title{On the number of monogenizations of a quartic order} 

\author{Manjul Bhargava \\ (with an Appendix by Shabnam Akhtari)}

\maketitle

\begin{abstract}
We show that an order in a quartic field has fewer than 3000 essentially different generators as a $\Z$-algebra (and fewer than 200 if the discriminant of the order is sufficiently large).  This significantly improves the previously best known bound of~$2^{72}$.

Analogously, we show that an order in a quartic field is isomorphic to the invariant order of at most 10 classes of integral binary quartic forms (and at most 7 if the discriminant is sufficiently large).  This significantly improves the previously best known bound of~$2^{80}$. 

\end{abstract}

\setcounter{tocdepth}{4}


\section{Introduction}

Let $\OO$ be an order in a number field $K$. The ring $\OO$ is said to be {\it monogenic} if it is generated by one element as a $\Z$-algebra, i.e., $\OO=\Z[\alpha]$ for some $\alpha\in\OO$; the element $\alpha$ is then called a {\it monogenizer} of $\OO$.  Note that if $\alpha$ is a monogenizer of $\OO$, than so is $\pm \alpha+c$ for any $c\in\Z$. Two monogenizers $\alpha$ and $\alpha'$ of $\OO$ are called {\it equivalent} if $\alpha'=\pm \alpha +c$ for some $c\in\Z$. An equivalence class of monogenizers of $\OO$ is called a {\it monogenization} of $\OO$.

It is a beautiful theorem of Gy\H{o}ry~\cite{G} that any order $\OO$ can have at most finitely many monogenizations.
This naturally raises the question as to how many monogenizations an order~$\OO$ can have.
It is elementary that an order~$\OO$ in a quadratic field $K$ always has exactly one monogenization.   For cubic fields, it follows from work of Bennett~\cite{B} on cubic Thue equations that an order~$\OO$ in a cubic field has at most 10 monogenizations.

For degrees $n\geq 4$, Evertse and Gy\H{o}ry proved in~\cite{EG} that an order~$\OO$ in a number field~$K$ of degree $n$ can have at most $(3\times 7^{2n!})^{n-2}$ monogenizations. The best known result to date for $n\geq 4$ is due to Evertse~\cite{E}, who proved that an order~$\OO$ in a number field~$K$ of degree $n$ can have at most $2^{4(n+5)(n-2)}$ monogenizations.
In the case $n=4$, Evertse's result thus shows that an order in a quartic field can have at most~$2^{72}$ monogenizations. 

The purpose of this article is to substantially improve this previously best known bound in the case $n=4$ as follows. 

\begin{theorem}\label{main1}
An order $\OO$ in a quartic number field can have at most $2760$
 monogenizations $($and at most $182$ monogenizations if $|\Disc(\OO)|$ is sufficiently large$)$. 
\end{theorem}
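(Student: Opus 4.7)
\emph{Proof strategy.}
The plan is to reduce the problem to the companion theorem on binary quartic forms (mentioned in the abstract) combined with a sharp Thue-equation count. Given a monogenizer $\alpha\in\OO$, shift $\alpha$ within its equivalence class $\{\pm\alpha+c : c\in\Z\}$ so that $\Tr(\alpha)$ is fixed modulo~$4$, and then homogenize its minimal polynomial $f$ into an integral binary quartic form $F(x,y) = y^4 f(x/y)$. Then $F$ is monic and its invariant order is~$\OO$; conversely, a monic integral binary quartic form with invariant order $\OO$ recovers a monogenizer. Thus each monogenization of $\OO$ corresponds, up to sign, to a pair $([F],[v])$ consisting of a $\GL_2(\Z)$-class $[F]$ of integral binary quartic forms with invariant order $\OO$ together with a primitive vector $v\in\Z^2$ (modulo $\pm$) satisfying $F(v) = \pm 1$.

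By the companion theorem, the number of such classes $[F]$ is at most $10$ in general, and at most $7$ when $|\Disc(\OO)|$ is sufficiently large. It then suffices to bound, uniformly over such $F$, the number of primitive vectors $v\in\Z^2/\!\pm$ with $F(v)=\pm 1$: a quartic Thue equation. Akhtari's appendix supplies the needed uniform upper bound, exploiting that $F$ arises as the homogenized minimal polynomial of a generator of~$\OO$, so its Galois and splitting structure is pinned down by the cubic resolvent of~$\OO$. Multiplying the two bounds yields the stated totals of $2760$ and $182$.

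The main obstacle is the uniform Thue bound. The general best bounds for quartic Thue equations (Bennett--de Weger, Evertse, and others) are far too crude by themselves; the sharp per-orbit bound must exploit the specific factorization of $F$ over the cubic resolvent field of $\OO$, and, for the large-discriminant regime, needs a quantitative gap principle removing small sporadic solutions. A secondary subtlety is the bookkeeping: one must verify that the equivalence relation $\alpha\sim\pm\alpha+c$ on monogenizers corresponds precisely to the combined equivalences $[F]\sim[\gamma\cdot F]$ and $v\sim\pm v$ within the $\GL_2(\Z)$-orbit, so that neither under- nor overcounting is introduced in passing from the pair count to the monogenization count.
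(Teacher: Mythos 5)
Your proposal follows essentially the same route as the paper: reduce to the companion bound of $10$ (resp.\ $7$) classes of binary quartic forms with invariant order $\OO$ --- itself obtained from the uniqueness of the cubic resolvent of a primitive quartic order together with the Bennett--Okazaki cubic Thue bound --- and then multiply by the quartic Thue bound of $276$ (resp.\ $26$) from Akhtari's appendix, giving $10\times 276=2760$ and $7\times 26=182$. The only inaccuracy is your stated ``main obstacle'': Akhtari's bound is an unconditional bound for arbitrary irreducible integral quartic forms of nonzero discriminant (combining her earlier work with the Bennett--Rechnitzer computations for small discriminant), so no special factorization of $F$ over the cubic resolvent field needs to be exploited.
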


A related problem is to understand the number of ways an order $\OO$ in a number field~$K$ of degree~$n$ can arise as the {\it invariant order} of an integral binary $n$-ic form.  \pagebreak Given an irreducible integral binary $n$-ic form $f(x,y)=a_0x^n+a_1x^{n-1}y+\cdots+a_ny^n$, there is a naturally associated order $\OO_f$ in the degree-$n$ field $K_f=\Q[x]/f(x,1)$, given as follows.  If $\theta$ denotes a root of $f(x,1)$ in $K_f$, then the invariant order $\OO_f$ of $f$ has $\Z$-basis given by 
\[\mbox{1, \,$a_0\theta$, \, $a_0\theta^2+a_1\theta$, \;\,$\ldots$\,, \,\,$a_0\theta^{n-1}+a_1\theta^{n-2}+\cdots+a_{n-2}\theta$.}\]  If $f$ and $f'$ are integral binary $n$-ic forms that are in the same class (i.e., $f'(x,y)=f((x,y)\gamma)$ for some element $\gamma\in\GL_2(\Z)$), then $\OO_f$ and $\OO_{f'}$ are also isomorphic as rings. Moreover, $\Disc(\OO_f)=\Disc(f)$ for any integral binary $n$-ic form $f$. The ring $\OO_f$ can in fact be naturally defined even when $f$ is not irreducible or does not have leading coefficient nonzero; see~\cite{Nakagawa, Wood2} for further details and references. 

It follows from the seminal work of Birch and Merriman~\cite{BM} that an order~$\OO$ in a number field~$K$ of degree $n$ can arise as the invariant order 
of at most finitely many classes of binary $n$-ic forms.  This naturally raises the question as to how many classes of binary $n$-ic forms there can be having a given invariant order.
 In the case $n=3$, the parametrization of cubic orders due to Levi and Delone--Faddeev implies that every order $\OO$ in a cubic field is the invariant order of a {\it unique} integral binary cubic form up to equivalence.  For degrees $n\geq 4$, B\'erczes, Evertse, and Gy\H{o}ry proved in \cite{BEG} that an order $\OO$ in a number field $K$ of degree $n$ can be the invariant order of at most $2^{24n^3}$ classes of integral binary $n$-ic forms.  This upper bound was subsequently improved to $2^{5n^2}$ by Evertse and Gy\H{o}ry~\cite[Theorem~17.1.1]{EG2}. 

In the case $n=4$, the result of Evertse and Gy\H{o}ry shows that an order in a quartic field can arise as the invariant order of at most~$2^{80}$ classes of binary quartic forms. In this article, we substantially improve this previously best known bound in the case $n=4$ as~follows. 

\begin{theorem}\label{main2}
An order $\OO$ in a quartic number field is isomorphic to the invariant order of at most $10$ classes of integral binary quartic forms $($and at most $7$ classes if $|\Disc(\OO)|$ is sufficiently large$)$. 
\end{theorem}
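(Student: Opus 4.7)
The plan is to reduce Theorem~\ref{main2} to Bennett's bound on monogenizations of cubic orders, via the classical \emph{cubic resolvent} construction. To each integral binary quartic form $f(x,y)$, one associates its cubic resolvent form $g_f$: a binary cubic form whose coefficients are $\GL_2(\Z)$-covariants of those of $f$. Its invariant cubic order $R_f := \OO_{g_f}$ is canonically a cubic resolvent ring of $\OO_f$, in a manner compatible with Bhargava's parametrization of pairs (quartic ring, cubic resolvent) by pairs of ternary quadratic forms.

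First, I would show that if $\OO_f \cong \OO_{f'} \cong \OO$ for two integral binary quartic forms $f, f'$, then their resolvent cubic orders $R_f$ and $R_{f'}$ are both isomorphic to a single canonical cubic order $R = R_\OO$ attached to $\OO$. Each such form $f$ then gives rise to a distinguished monogenizer $\alpha_f \in R$ naturally extracted from $g_f$ (for instance, as a root of the associated monic cubic polynomial).

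Next, using the Bhargava--Wood parametrization of binary quartic forms by quartic rings together with monogenized cubic resolvent data, I would show that the map from $\GL_2(\Z)$-equivalence classes of binary quartic forms $f$ with $\OO_f \cong \OO$ to equivalence classes of monogenizers of $R$---sending $[f]$ to the class of $\alpha_f$ under $\alpha \sim \pm \alpha + c$---is injective. Given this injectivity and the fact that $|\Disc(R)| = |\Disc(\OO)|$ under this correspondence, Bennett's theorem~\cite{B}---that an order in a cubic field has at most $10$ monogenizations, and at most $7$ when its discriminant is sufficiently large---yields the stated bounds.

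The main obstacle will be rigorously establishing the injectivity: carefully matching $\GL_2(\Z)$-equivalence of binary quartic forms with $\{\pm \alpha + c\}$-equivalence of monogenizers of the cubic resolvent, and handling the subtlety that a quartic order may admit several cubic resolvent rings---verifying that the resolvent construction from binary quartic forms canonically selects a single one, so that all $f$ with $\OO_f \cong \OO$ genuinely land in the monogenizations of one fixed $R_\OO$.
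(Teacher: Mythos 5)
Your overall strategy is the paper's: combine Wood's parametrization of binary quartic forms by triples $(Q,R,\beta)$ (quartic ring, monogenic cubic resolvent, monogenizer) with the Bennett--Okazaki bound of $10$ (resp.\ $7$) on the number of monogenizations of a cubic order. The injectivity you single out as the main obstacle is not really one --- it is exactly the content of Theorem~\ref{woodthm}, which is already a bijection between $\{\pm1\}\times\PGL_2(\Z)$-classes of forms and such triples. The genuine gap is in your proposed treatment of the multiplicity of cubic resolvent rings. Your claim that the resolvent construction canonically selects a single cubic order $R_\OO$, so that all $f$ with $\OO_f\cong\OO$ land in the monogenizations of one fixed $R_\OO$, fails when $\OO$ is imprimitive. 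If $\OO=\Z+k\OO'$ with $k=c(\OO)>1$ and $\OO'$ primitive, then $\OO$ has $\sigma(k)$ cubic resolvent rings (Theorem~\ref{monqr}). Every $f$ with $\OO_f\cong\OO$ has content $k$, say $f=kf'$, and its monic cubic resolvent form is $x^3+kg_1x^2y+k^2g_2xy^2+k^3g_3y^3$, where $x^3+g_1x^2y+g_2xy^2+g_3y^3$ is the resolvent of $f'$; hence $R_f=\Z+k\Z\beta'+k^2\Z{\beta'}^2$, where $\beta'$ is the monogenizer of the (unique) resolvent $R'$ of $\OO'$ attached to $f'$. For inequivalent monogenizers $\beta'$ these index-$k^3$ subrings of $R'$ are in general distinct and, when $R'$ has no nontrivial automorphisms, non-isomorphic. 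So there is no single $R_\OO$ to inject into, and counting triples $(\,\OO,R,\beta)$ naively only gives the useless bound $10\,\sigma(k)$.

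The missing idea is the reduction to the primitive case: since $\OO_f\cong\OO$ if and only if $\OO_{f/c(f)}\cong\OO'$, the classes of forms with invariant order $\OO$ are in bijection with the classes of primitive forms with invariant order $\OO'$, and a primitive quartic ring has a \emph{unique} cubic resolvent ring by Theorem~\ref{monqr}. After that one line, your argument closes exactly as in the paper. Two smaller points: the ``at most $7$'' bound for cubic forms of large discriminant is Okazaki's, not Bennett's; and, as in the paper, the resulting count of $10$ is for classes of quartic forms taken up to negation, since Wood's bijection is with $\{\pm1\}\times\PGL_2(\Z)$-orbits rather than $\PGL_2(\Z)$-orbits.
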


We prove Theorem~\ref{main1} 
via a parametrization of quartic rings and their cubic resolvent rings by pairs of integral ternary quadratic forms that we established in~\cite{hcl3}, together with a parametrization of quartic rings having monogenized cubic resolvent rings by integral binary quartic forms due to Wood~\cite{Wood}.  The relationship between quartic index equations and pairs of ternary quadratic forms was first studied by Ga\'al, Peth\H{o} and Pohst in~\cite{GPP}. Here, our aim is to take this relationship further by interpreting this connection using the aforementioned parametrizations of quartic rings and their cubic resolvents. 

Specifically, we apply the above parametrizations of quartic rings and their cubic resolvent rings by suitable pairs of ternary quadratic forms to reduce Theorem~\ref{main1} to counting solutions to certain pairs of Thue equations, one of which is quartic and the other cubic.  
The bound in Theorem~\ref{main1} then arises as a product of the best known bounds on the number of solutions to quartic Thue equations and the number of solutions to cubic Thue equations.

The best known upper bound on the number of solutions to cubic Thue equations is due to Bennett~\cite{B}, and due to Okazaki~\cite{O} in the case of sufficiently large absolute discriminant (see Theorem~\ref{BO}).
Meanwhile, by combining her work in \cite{Akhsmallm, Akh10} with the recent computational results of Bennett and Rechnitzer~\cite{BR}, Akhtari deduces the best known upper bounds on the number of solutions to quartic Thue equations in Appendix A (see Theorem~\ref{SSthm}). We thank her for this valuable contribution. 

Meanwhile, the bounds in Theorem~\ref{main2} are seen to arise along the way as the best known bounds on the number of solutions to cubic Thue equations.  Thus any future improvements to the aforementioned best known bounds on the number of solutions to cubic and quartic Thue equations would also yield corresponding improvements to the bounds stated in Theorems~\ref{main1} and \ref{main2}.


\section{Preliminaries on the quadratic and cubic cases}

\subsection{Rings of rank $n$}

A {\it ring of rank $n$} is a commutative ring with unity that is free of rank $n$ as a $\Z$-module.  Rings of rank $n=2$, 3, 4, or 5 are called {\it quadratic}, {\it cubic}, {\it quartic}, and {\it quintic} rings, respectively.  

To any ring $\cR$ of rank $n$ we may attach
the {\it trace} function $\Tr:\cR\rightarrow\Z$, which assigns to an
element $\alpha\in \cR$ the trace of the endomorphism
\smash{$\cR\!\xrightarrow{\!\times \alpha\!}  \!\cR$}. The {\it discriminant}
$\Disc(\cR)$ of such a ring $\cR$ is then defined as the determinant
$\det(\Tr(\alpha_i \alpha_j))\in\Z$, where $\{\alpha_i\}$ is any
$\Z$-basis of~$\cR$.  

The {\it content} $c(\cR)$ of a ring $\cR$ of rank $n$ is the maximum integer $k$ such that $\cR=\Z+k\cR'$ for some ring $\cR'$ of rank $n$; the ring $\cR$ is called {\it primitive} if its content is $1$.  The content~$c(\OO_f)$ of the invariant order $\OO_f$ of a binary $n$-ic form $f$ is equal to the content~$c(f)$ of the form~$f$ itself (i.e., the gcd of the coefficients of $f$), and indeed $\OO_f=\Z+c(\OO_f)\OO_{f/c(f)}$. If $f$ has leading coefficient 1, then $f(x,1)$ is a monic integer polynomial in one variable, and $\OO_f=\Z[x]/(f(x,1))$; moreover, $\OO_f$ is primitive in this case, since the gcd of the coefficients of $f$ is 1. 
An explicit multiplication table for~$\OO_f$ in terms of the coefficients of $f$, for a general integral binary form $f$, was given by Nakagawa~\cite{Nakagawa}; see also the work of Wood~\cite{Wood2} for a coordinate-free description of~$\OO_f$. 

Important in the proof of Theorems~\ref{main1} and \ref{main2} is the parametrization of quartic rings by pairs of integral ternary quadratic forms.  Before discussing this parametrization, we review the parametrizations of  rings of rank $n\leq 3$. 
(The only ring of rank 1 is $\Z$.)  

\subsection{The quadratic case}

Quadratic rings are parametrized by their discriminants~$D$, which are integers congruent to 0 or 1 (mod~4).  The unique quadratic ring $S(D)$ of discriminant $D$, up to isomorphism, is given by $$S(D):=\Z[(D+\sqrt{D})/2],$$
or alternatively, $S(D):=\Z[x]/(x^2-D+(D^2-D)/4)$.  In particular, all quadratic rings are monogenic, and indeed all have exactly one monogenization.  

The invariant ring of an integral binary quadratic form of discriminant $D$ is simply~$S(D)$.  Thus the number of $\GL_2(\Z)$-classes of integral binary quadratic forms having given invariant ring $S(D)$ is equal to the number of such classes having discriminant $D$.  

The latter number of classes is also equal to the number of (not-necessarily-invertible) oriented ideal classes $I$ of $S(D)$, up to automorphisms of $S(D)$. (See, e.g.,~\cite[\S3.2]{hcl1} for the definition of an oriented  ideal class.) This can be seen by noting that an oriented ideal class $I$ of $S(D)$ naturally corresponds to the integral binary quadratic form given by the norm form of~$I$ (i.e., if $I$ is spanned by $\alpha,\beta$ as a $\Z$-module, then $I$ corresponds to the binary quadratic form $N(\alpha x+ \beta y)/N(I)$).  Conversely, it is well known that an integral binary quadratic form $f$ of discriminant $D$ arises in this way from a unique oriented ideal class $I$ of $S(D)$, up to automorphisms of $S(D)$, whose norm form is in the $\GL_2(\Z)$-class of $f$ (see, e.g.,~\cite[\S3.2]{hcl1}).

The number of oriented ideal classes $I$ of $S(D)$, up to automorphisms of $S(D)$, can in turn be described in terms of the {narrow class group} of~$S(D)$ (in the sense of~\cite[\S3.2]{hcl1}) and of its subrings. Namely, the {invertible} oriented ideal classes $I$ of~$S(D)$, up to automorphisms of~$S(D)$, are in bijection with the elements $I$ of the oriented class group $\Cl^+(S(D))$ of~$S(D)$, modulo~the equivalence relation $I\sim I^{-1}$;  
the number of such classes is thus given by $\frac12[h^+(S(D))+h_2^+(S(D))]$, where $h^+(S(D)):=\#\Cl^+(S(D))$ and $h_2^+(S(D)):=\#\Cl^+(S(D))[2]$. 
We conclude that the number of (not-necessarily-invertible) ideal classes $I$ of~$S(D)$, up to automorphisms of~$S(D)$, is equal to 
\begin{equation}\label{cnformula}
\sum_{\scriptstyle{k^2|D}\atop \scriptstyle{D/k^2\,\equiv\, 0 \mathrm{\:or\:}1\!\!\!\!\!\pmod 4}} \textstyle\frac12[h^+(S(D/k^2))+h_2^+(S(D/k^2))].
\end{equation}

We summarize this discussion in the following theorem:

\begin{theorem}
Every quadratic ring $S(D)$ has exactly one monogenization.  The number of classes of binary quadratic forms having invariant order isomorphic to $S(D)$ is 
the number 
of 
\:\!$($\:\!\!not-necessarily-invertible$)\!$ 
oriented ideal classes $I$ of $S(D)$, up to automorphisms of~$S(D)$,~and is given by $(\ref{cnformula})$.
\end{theorem}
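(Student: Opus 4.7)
The monogenization claim is immediate. By construction $S(D)=\Z[\alpha]$ with $\alpha=(D+\sqrt D)/2$, so at least one monogenization exists. For uniqueness, any putative monogenizer $\beta$ expands as $\beta=m+n\alpha$ in the $\Z$-basis $\{1,\alpha\}$, and the requirement that $\{1,\beta\}$ itself be a $\Z$-basis of $S(D)$ forces $n=\pm 1$, so $\beta=\pm\alpha+m$ is equivalent to $\alpha$.

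For the second claim I would chain together three identifications. First, since $\Disc(\OO_f)=\Disc(f)$ for every binary quadratic form $f$ and $S(D)$ is the unique quadratic ring of discriminant $D$, a form has invariant order isomorphic to $S(D)$ if and only if it has discriminant $D$. Second, I would invoke the classical form-to-ideal correspondence: an oriented fractional ideal $I=\Z\alpha+\Z\beta\subseteq S(D)\otimes\Q$ produces the binary quadratic form $N(\alpha x+\beta y)/N(I)$, and as recalled in \cite[\S3.2]{hcl1} this descends to a bijection between $\GL_2(\Z)$-classes of integral binary quadratic forms of discriminant $D$ and (not-necessarily-invertible) oriented ideal classes of $S(D)$ modulo $\Aut(S(D))$. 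Third, I would decompose the non-invertible classes by multiplier ring: every oriented ideal class of $S(D)$ is an invertible oriented ideal class of a unique overorder $S'\supseteq S(D)$, and the overorders of $S(D)$ are exactly the rings $S(D/k^2)$ with $k^2\mid D$ and $D/k^2\equiv 0,1\pmod 4$.

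It then remains to count invertible oriented ideal classes of $S(D/k^2)$ modulo $\Aut(S(D/k^2))$. These classes form the narrow class group $\Cl^+(S(D/k^2))$, on which the nontrivial automorphism acts as $I\mapsto I^{-1}$; the standard orbit count for an involution on an abelian group gives $\tfrac12[h^+(S(D/k^2))+h_2^+(S(D/k^2))]$ orbits (namely the $h_2^+$ fixed points together with $(h^+-h_2^+)/2$ two-element orbits). Summing over all admissible $k$ then yields (\ref{cnformula}).

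The main obstacle is the second step: the precise statement of the form-to-ideal correspondence in the non-invertible case, together with the matching between $\GL_2(\Z)$-equivalence on forms and oriented equivalence of ideals modulo $\Aut(S(D))$. Since this is already laid out cleanly in \cite[\S3.2]{hcl1}, the proof reduces to assembling the three bijections above with the involution orbit count.
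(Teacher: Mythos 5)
Your proposal is correct and follows essentially the same route as the paper: reduce to forms of discriminant $D$, invoke the oriented-ideal/norm-form correspondence of \cite[\S3.2]{hcl1}, stratify the not-necessarily-invertible classes by their multiplier order $S(D/k^2)$, and count orbits of the involution $I\mapsto I^{-1}$ on $\Cl^+$ to get $\frac12[h^++h_2^+]$. You supply somewhat more detail than the paper does (the explicit uniqueness argument for the monogenizer, and the fixed-point count for the involution), but the structure is identical.
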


\subsection{The cubic case}

The parametrization of cubic rings is due to Levi~\cite{Levi}, Delone--Faddeev~\cite{DF}, and in its most general form, Gan--Gross--Savin~\cite{GGS}.  The statement is that isomorphism classes of cubic rings are in natural one-to-one-correspondence with classes of integral binary cubic forms.  Given an integral binary cubic form $f(x,y)=ax^3+bx^2 y+cx y^2+dy^3$, the associated cubic ring $R(f)$ has $\Z$-basis $\langle1$, $\omega$, $\theta\rangle$, with multiplication table given by 
\begin{equation}\label{ringlaw3}
\begin{array}{cll}
  \omega\theta &=& -ad \\
  \omega^2 &=& -ac - b \omega + a \theta \\
  \theta^2 &=& -bd - d \omega + c \theta.
\end{array}
\end{equation}
This is indeed the multiplication table for the invariant order of the binary cubic form $f$. 

Conversely, given a cubic ring $R$, let $\langle 1,\omega,\theta\rangle$ be a
  $\Z$-basis for $R$.  Translating $\omega$ and $\theta$ by the appropriate
  elements of $\Z$, we may assume that $\omega\theta\in\Z$; in
  the terminology of~\cite{DF}, a basis satisfying the latter
  condition is called {\it normal}.
If $\langle 1,\omega,\theta\rangle$ is a normal basis, then there exist constants
$a,b,c,d,\ell,m,n\in\Z$ such that
\begin{equation}\label{ringlaw3b}
\begin{array}{cll}
  \omega\theta &=& n \\
  \omega^2 &=& m - b \omega + a \theta \\
  \theta^2 &=& \ell\, - d \omega + c \theta.
\end{array}
\end{equation}
One checks that the associative law $\omega^2\cdot\theta=\omega\cdot \omega\theta$ and $\omega\cdot\theta^2=\omega\theta\cdot \theta$ implies that $n=-ad$, $m=-ac,$, and $\ell=-bd$.  Thus, to the cubic ring $R$, we may associate the binary cubic form $f(x,y)=ax^3+bx^2 y+cx y^2+dy^3$, and we clearly have $R=R(f)$.  

One checks that changing the $\Z$-basis $\langle\omega,\theta\rangle$ of $R/\Z$ by an element $\gamma\in\GL_2(\Z)$, and then renormalizing, results in $f(x,y)$ being replaced by $\det(\gamma)^{-1}f((x,y)\gamma)$; 
thus changing the basis of $R(f)/\Z$ leads to a $\GL_2(\Z)$-equivalent form under this correspondence. 

We have proven the following theorem.

\begin{theorem}
There is a bijection between $\GL_2(\Z)$-classes of integral binary cubic forms and isomorphism classes of cubic rings.  
\end{theorem}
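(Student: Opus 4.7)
The plan is to exhibit mutually inverse maps between $\GL_2(\Z)$-classes of integral binary cubic forms and isomorphism classes of cubic rings, much of which has already been set up in the paragraphs preceding the theorem statement.

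First, I would construct the forward map $f \mapsto R(f)$. Given $f(x,y) = ax^3 + bx^2y + cxy^2 + dy^3$, declare $R(f)$ to be the free $\Z$-module on symbols $1, \omega, \theta$ with the multiplication table (\ref{ringlaw3}). The only nontrivial verification is that this prescription defines an associative ring; since commutativity is built in, the associativity conditions reduce to the two identities $\omega^2 \cdot \theta = \omega \cdot (\omega\theta)$ and $\omega \cdot \theta^2 = (\omega\theta) \cdot \theta$. I would compute both sides directly using (\ref{ringlaw3}) and observe that the specific choices $\omega\theta = -ad$, $\omega^2 = -ac - b\omega + a\theta$, $\theta^2 = -bd - d\omega + c\theta$ are precisely what is needed for these to balance.

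Second, for the reverse direction, I would take any cubic ring $R$ and pick a $\Z$-basis $\langle 1, \omega, \theta\rangle$. By translating $\omega$ and $\theta$ by suitable integers, I can ensure the basis is normal, i.e.\ $\omega\theta \in \Z$; uniqueness of this renormalization (once the $\Z$-span of $\omega$ and $\theta$ in $R/\Z$ is fixed) is elementary. Writing the general expansions (\ref{ringlaw3b}) with unknowns $a,b,c,d,\ell,m,n$, I would invoke the two associativity identities above to extract the relations $n = -ad$, $m = -ac$, $\ell = -bd$. This identifies $R$ with $R(f)$ for the cubic form $f(x,y) = ax^3 + bx^2y + cxy^2 + dy^3$.

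The remaining step, which I expect to be the main technical point, is to show that this reverse construction descends to a well-defined bijection on equivalence classes. Two normal bases for the same cubic ring differ by the action of some $\gamma \in \GL_2(\Z)$ on $\langle \omega, \theta\rangle$, followed by a renormalizing translation by $\Z$. I would need to track the effect on the coefficients of $f$ and verify the key identity that such a change of basis replaces $f(x,y)$ by $\det(\gamma)^{-1} f((x,y)\gamma)$ — the sign twist being the reason this is a $\GL_2(\Z)$-equivariant statement rather than an $\SL_2(\Z)$-equivariant one. Once this equivariance is established, both the well-definedness of the map $R\mapsto[f]$ on isomorphism classes and its mutual inverseness with $[f]\mapsto R(f)$ follow, completing the bijection.
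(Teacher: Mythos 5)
Your proposal follows essentially the same route as the paper: construct $R(f)$ from the multiplication table (\ref{ringlaw3}), recover $f$ from a normal basis of an arbitrary cubic ring via the associativity relations $n=-ad$, $m=-ac$, $\ell=-bd$, and check the $\det(\gamma)^{-1}f((x,y)\gamma)$ equivariance under change of basis. The argument is correct and matches the paper's proof step for step.
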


In coordinate-free terms, the form $f(x,y)$ represents the cubic map
$R/\Z\to \wedge^2(R/\Z)\cong\Z$ given by $r\mapsto r\wedge r^2$.  
There is again an equality of discriminants $\Disc(R(f))=\Disc(f)$, and the cubic rings $R(f)$ that are integral domains correspond to binary cubic forms~$f$ that are irreducible over $\Q$.  For details, see~\cite[\S2]{BST}. 

We may now ask: for which integral binary cubic forms $f(x,y)$ is the corresponding cubic ring $R(f)$ monogenic?  To answer this question, we observe that, if $R(f)$ is monogenic, then via a change of basis, we may assume that $\omega$ is a generator of $R(f)$ as a $\Z$-algebra.  The determinant of the change of basis of $R(f)$ from $\langle 1,\omega,\omega^2\rangle$ to $\langle1,\omega,\theta\rangle$ is clearly seen from (\ref{ringlaw3}) to be $a$.  Thus, by switching the sign of $\omega$ if necessary, we must have $a=1$.  
Conversely, if $a=1$, then $R(f)$ is spanned over $\Z$ by $1,\omega,\omega^2$, and so is generated by $\omega$ as a $\Z$-algebra.

It follows that $R(f)$ is monogenic if and only if $f(x,y)$ represents 1; moreover, since the action of $\GL_2(\Z)$  on the basis $\langle\omega,\theta\rangle$ of $R/\Z$ results in a corresponding action of $\GL_2(\Z)$ on the form $f$, we see that the monogenizations of $R(f)$ are naturally in one-to-one correspondence with the representations of 1 by~$f$. 

\begin{theorem}
Let $R$ be a cubic ring and $f$ its associated integral binary cubic form. Then there is a bijection between the monogenizations of $R$ and the representations of $1$ by $f$.
\end{theorem}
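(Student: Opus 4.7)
The plan is to fix a normal $\Z$-basis $\langle 1,\omega_0,\theta_0\rangle$ of $R=R(f)$ corresponding to $f(x,y)=ax^3+bx^2y+cxy^2+dy^3$, and then parametrize monogenizers in terms of their $(\omega_0,\theta_0)$-coordinates modulo $\Z$. Every element $\alpha\in R$ has a unique representation $\alpha = e+p\omega_0+q\theta_0$ with $e,p,q\in\Z$. Since translating $\alpha$ by an integer or negating it does not affect whether $\Z[\alpha]=R$, the equivalence class of a monogenizer $\alpha$ is determined exactly by the unordered pair $\pm(p,q)\in\Z^2/\{\pm 1\}$. So the task is to determine for which $(p,q)\in\Z^2$ the element $p\omega_0+q\theta_0$ generates $R$ as a $\Z$-algebra.

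Next I would compute the change of basis from $\langle\omega_0,\theta_0\rangle$ to $\langle p\omega_0+q\theta_0,\,(p\omega_0+q\theta_0)^2\bmod\Z\rangle$ using the multiplication table~(\ref{ringlaw3}). Expanding,
\[
(p\omega_0+q\theta_0)^2 = p^2\omega_0^2 + 2pq\,\omega_0\theta_0 + q^2\theta_0^2 \equiv (-bp^2-dq^2)\omega_0 + (ap^2+cq^2)\theta_0 \pmod{\Z},
\]
so the change of basis matrix is
\[
M \;=\; \begin{pmatrix} p & -bp^2-dq^2 \\ q & \phantom{-}ap^2+cq^2\end{pmatrix},
\qquad \det(M) \;=\; ap^3+bp^2q+cpq^2+dq^3 \;=\; f(p,q).
\]
Hence $\{1,\alpha,\alpha^2\}$ is a $\Z$-basis of $R$ (equivalently, $\alpha$ is a monogenizer) if and only if $\det(M)=\pm1$, i.e., $f(p,q)=\pm 1$.

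Finally, because $f$ is homogeneous of odd degree~$3$, one has $f(-p,-q)=-f(p,q)$, so within each equivalence class $\pm(p,q)$ exactly one representative satisfies $f(p,q)=+1$ and the other satisfies $f(-p,-q)=-1$. The rule $[\alpha]\mapsto(p,q)$ with $f(p,q)=1$ is therefore a well-defined injection from monogenizations of $R$ into representations of $1$ by $f$; conversely, any solution $(p,q)$ of $f(p,q)=1$ gives a monogenizer $p\omega_0+q\theta_0$ by the determinant computation above, and these constructions are mutually inverse. This yields the claimed bijection.

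The computation in the middle paragraph is the only substantive step, but it is routine once the Delone--Faddeev multiplication table is in hand; the one subtle point that is worth highlighting is the use of the homogeneity of $f$ in odd degree to pass from the condition $f(p,q)=\pm 1$ (which controls whether $\alpha$ generates) to a bijection with representations of $1$ alone (which corresponds to the sign ambiguity $\alpha\leftrightarrow-\alpha$ built into the definition of a monogenization).
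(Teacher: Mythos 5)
Your proof is correct and follows essentially the same route as the paper: both identify a monogenizer with its coordinate vector $(p,q)$ modulo sign and integer translation, and both recognize that the relevant change-of-basis determinant equals $f(p,q)$, with the odd-degree homogeneity $f(-p,-q)=-f(p,q)$ resolving the $\pm1$ ambiguity. The only cosmetic difference is that the paper computes the determinant just in the case $(p,q)=(1,0)$, where it equals the leading coefficient $a$, and invokes the $\GL_2(\Z)$-equivariance of the Delone--Faddeev correspondence to handle general $(p,q)$, whereas you carry out the general determinant computation directly from the multiplication table (\ref{ringlaw3}).
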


Thus, to bound the number of monogenizations of a cubic order~$R(f)$, we are reduced to bounding the number of solutions of the cubic Thue equation $f(x,y)=1$.  This equation has a long history, going back to the work of Thue~\cite{Thue}, who proved the finiteness of the number of solutions for binary forms $f$ of any degree, with subsequent improved bounds and variants by many authors including Siegel~\cite{Siegel}, Lewis--Mahler~\cite{LM}, Evertse~\cite{Eve}, Bombieri--Schmidt~\cite{BoSch}, Stewart~\cite{Ste}, and Gy\H{o}ry~\cite{Gyo2001} (see also~\cite{Akhtr,YuZh, SS, Kre39}).  

The best known bounds currently on the number of solutions to cubic Thue equations are due to Bennett~\cite{B} and Okazaki~\cite{O} (see also Akhtari~\cite{A}), in the cases of arbitrary nonzero discriminant and sufficiently large absolute discriminant, respectively:

\begin{theorem}[Bennett~\cite{B}, Okazaki~\cite{O}]\label{BO}
An integral binary cubic form $f$ of nonzero discriminant can represent $1$ at most $10$ times, and at at most $7$ times if $f$ has sufficiently large absolute discriminant. 
\end{theorem}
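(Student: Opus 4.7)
The plan is to follow the Thue--Siegel--Pad\'e strategy that underlies both \cite{B} and \cite{O}. Write $f(x,y) = a_0\prod_{i=1}^{3}(x - \alpha_i y)$ with $\alpha_1,\alpha_2,\alpha_3\in\overline{\Q}$ distinct, and suppose $(x,y)\in\Z^2$ is a solution to $f(x,y) = 1$. After absorbing the $(x,y)\mapsto(-x,-y)$ symmetry (which identifies pairs of solutions for degree-$3$ forms representing $1$ and $-1$ simultaneously), the equation forces $x/y$ to be very close to one of the real roots $\alpha_i$ (there is at least one real root since $\deg f$ is odd). I would partition the solution set into at most three \emph{clusters} according to which root $\alpha_i$ the ratio $x/y$ approximates best, and then bound each cluster separately.

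Within a single cluster, the inequality $|\alpha_i - x/y|\leq C/|y|^3$, with $C$ explicit in terms of $\Disc(f)$ and the height of $f$, would be exploited via a quantitative irrationality measure for $\alpha_i$ coming from explicit Pad\'e approximants to $(1-z)^{1/3}$ built out of the hypergeometric family ${}_2F_1(-n,-n-\tfrac13;\,1;\,z)$. These produce two sequences of polynomials $P_n, Q_n \in \Z[z]$ satisfying $P_n(z) - (1-z)^{1/3} Q_n(z) = z^{2n+1}R_n(z)$, and careful archimedean and $p$-adic estimates of their coefficients (of Chudnovsky type, controlling the denominators of the binomial coefficients involved) yield an effective irrationality exponent $\kappa < 3$ for $\alpha_i$. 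This translates into a \emph{gap principle}: if $(x_j, y_j)$ are solutions in a single cluster ordered by $0 < y_1 < y_2 < \cdots$, then $y_{j+1} \geq C' y_j^{\lambda}$ for some explicit $\lambda > 1$ and $C'>0$. Iterating the gap principle yields an absolute upper bound on the number of solutions within each cluster whose $|y|$ exceeds a computable threshold depending only on $\Disc(f)$.

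For the finitely many remaining \emph{small} solutions, one combines explicit height bounds with a case analysis over the finitely many $\GL_2(\Z)$-reduced cubic forms of small discriminant. In \cite{B}, Bennett performs this case analysis (assisted by a modest computer search, refined in \cite{BR}) and aggregates the cluster and small-solution counts to obtain the uniform bound of $10$. To pass from $10$ to $7$ when $|\Disc(f)|$ is sufficiently large, Okazaki \cite{O} sharpens the Pad\'e constants so that \emph{all} solutions fall under the large-solution gap principle, reducing the per-cluster contribution from three to two and eliminating the borderline configurations producing $8$, $9$, or $10$ solutions; these configurations correspond to forms of small discriminant and thus disappear in the asymptotic regime.

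The main obstacle is obtaining the explicit Pad\'e constants with the precision needed to enforce the clean numerical bounds $10$ and $7$: one needs the irrationality exponent to beat the critical threshold $3$ by a definite margin, and the $p$-adic valuations of the denominators appearing in the hypergeometric coefficients must be controlled uniformly in $n$. Once this analytic input is in hand, the reduction to cluster counts and the finite verification over small-discriminant forms are essentially bookkeeping.
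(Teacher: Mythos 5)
The paper does not prove Theorem~\ref{BO} at all: it is imported wholesale as an external input, attributed to Bennett~\cite{B} (the bound $10$) and Okazaki~\cite{O} (the bound $7$ for large discriminant), and the surrounding text only explains how the theorem is \emph{used} (to bound the number of monogenizations of a cubic ring, and later the number of monogenizers $\beta$ of a cubic resolvent). So there is no in-paper argument to compare yours against; what you have written is an outline of the proofs in the cited references. As such an outline it identifies the right machinery --- hypergeometric Pad\'e approximants to $(1-z)^{1/3}$, an effective irrationality exponent below $3$, a gap principle per real root, and a finite verification for small discriminants --- but it is a roadmap rather than a proof: all of the quantitative content (the explicit irrationality measure, the constants in the gap principle, the threshold separating small from large solutions, and the bookkeeping that yields exactly $10$ and $7$) is deferred, and that content is precisely where the difficulty of \cite{B} and \cite{O} lies.

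A few specific inaccuracies. First, your remark about ``absorbing the $(x,y)\mapsto(-x,-y)$ symmetry'' is off for cubics: since $f(-x,-y)=-f(x,y)$ for a binary cubic form, that involution carries representations of $1$ to representations of $-1$ and does not identify two distinct representations of $1$; the theorem counts representations of $1$ only, so there is nothing to quotient by (contrast this with the quartic Theorem~\ref{SSthm}, where the identification is genuinely made). Second, the computational verification for small-discriminant cubic forms belongs to \cite{B} and the earlier cubic Thue literature; the reference \cite{BR} you invoke concerns \emph{quartic} forms and enters this paper only in the Appendix. Third, describing \cite{O} as a sharpening of the Pad\'e constants does not match that paper, which (as its title indicates) proceeds by a geometric analysis of the cubic Thue equation via the unit lattice and logarithmic embedding rather than by tightening Bennett's hypergeometric estimates. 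None of these affect the truth of the statement, but they do mean the proposal cannot stand on its own as a proof of it.
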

Hence a cubic ring of nonzero discriminant can have at most 10 monogenizations, and a cubic ring of sufficiently large absolute discriminant can have at most 7 monogenizations.  (It is known that $\Z[\zeta_7+\zeta_7^{-1}]$ has 9 monogenizations, so the constant 10 in Theorem~\ref{BO} is thus optimal or nearly optimal!) 
We summarize this discussion in the following theorem. 

\begin{theorem}\label{cubmon}
A cubic ring $R$ can have at most $10$ monogenizations, and a cubic ring of sufficiently large absolute discriminant at most $7$ monogenizations.  Every cubic ring is isomorphic to the invariant order of exactly one class of binary cubic forms.
\end{theorem}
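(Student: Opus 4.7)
The plan is to assemble Theorem~\ref{cubmon} directly from the three preceding results of this subsection, since essentially no new mathematics is required. The second sentence of the theorem is simply a restatement of the Delone--Faddeev bijection together with the definition of $R(f)$: that bijection identifies isomorphism classes of cubic rings with $\GL_2(\Z)$-classes of integral binary cubic forms, and by construction $R(f)$ is the invariant order of $f$. Thus every cubic ring $R$ arises as $R(f)$ for a unique $\GL_2(\Z)$-class of integral binary cubic forms, giving the second sentence.

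For the first sentence, I would fix a cubic ring $R$ of nonzero discriminant, let $f$ be its associated integral binary cubic form under the Delone--Faddeev bijection, and invoke the preceding theorem to identify the monogenizations of $R$ with the representations of $1$ by $f$. Since $\Disc(f) = \Disc(R) \neq 0$, Theorem~\ref{BO} (Bennett, and Okazaki in the large-discriminant case) applies and gives at most $10$ such representations in general, and at most $7$ when $|\Disc(f)|$ is sufficiently large. Transporting through the bijection yields the claimed bounds on monogenizations of $R$.

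There is no substantive obstacle: all the deep content is concentrated in the Thue-equation bounds of Bennett and Okazaki, and the combinatorial reduction to that input has already been carried out in the discussion preceding Theorem~\ref{BO}. The present theorem is a clean bookkeeping corollary, the only care being to confirm the nonzero-discriminant hypothesis (which is automatic for orders in cubic number fields, the setting of the paper) and to note the compatibility of the $\pm\alpha+c$ equivalence on monogenizers with the identification of representations of $1$ already spelled out in the earlier theorem.
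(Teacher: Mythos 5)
Your proposal is correct and follows the paper's own route exactly: the paper presents Theorem~\ref{cubmon} as a summary of the preceding discussion, combining the Levi/Delone--Faddeev bijection, the identification of monogenizations of $R(f)$ with representations of $1$ by $f$, and the Bennett--Okazaki bounds of Theorem~\ref{BO}. Your added remark about the nonzero-discriminant hypothesis being automatic for orders in cubic fields is a reasonable and harmless clarification.
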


\section{The quartic case}

\subsection{The parametrization of quartic rings and cubic resolvent rings}

We now recall the parametrization of quartic rings together with their cubic resolvent rings, by pairs of integral ternary quadratic forms, as developed in \cite{hcl3}. To state the theorem, 
let~$W_\Z$ denote the space of pairs $(A,B)$ of ternary quadratic forms
having coefficients in $\Z$. We identify ternary quadratic
forms over $\Z$ with their Gram matrices whose coefficients lie in~$\frac12\Z$; we may thus express an element $(A,B)\in W_\Z$ as a pair
of $3\times 3$ symmetric matrices via
$$2 \cdot (A,B)=\left(\left[\begin{array}{ccc} 2a_{11} & a_{12} & a_{13} \\ a_{12} & 2a_{22} & a_{23} 
\\ a_{13} & a_{23} & 2a_{33} \end{array} \right],
  \left[ \begin{array}{ccc} 2b_{11} & b_{12} & b_{13} \\ b_{12} &
      2b_{22} & b_{23} \\ b_{13} & b_{23} & 2b_{33} \end{array}
  \right] \right),$$ where $a_{ij},b_{ij}\in \Z$. 

The group
$\GL_3(\Z)\times \GL_2(\Z)$ acts naturally on the space $W_\Z$.
Namely, an element $g_3\in \SL_3(\Z)$ acts on $W_\Z$ by
$g_3\cdot(A,B)=(g_3Ag_3^t,g_3Bg_3^t)$, while an element
$g_2=\left(\begin{smallmatrix}{} p & q\\r & s \end{smallmatrix}\right)
\in \GL_2(\Z)$ acts by $g_2\cdot(A,B)=(pA+qB,rA+sB)$. The ring of
polynomial invariants for the action of $\GL_3(\Z)\times\GL_2(\Z)$ on
$W_\Z$ is generated by one element, which is called the
{\it discriminant}. The discriminant $\Delta(A,B)$ of an element $(A,B)\in W_\Z$ is
given by the discriminant of the binary cubic form $4\,\Det(Ax-By)$ in $x$ and
$y$, and is thus an invariant of degree 12 in the entries of $A$ and $B$.

We find that $\GL_3(\Z)\times
\GL_2(\Z)$-orbits on the space $W_\Z$ parametrize quartic rings together with their ``cubic resolvent rings'':

\begin{theorem}[\cite{hcl3}]\label{first}
There is a canonical bijection between the set of $\GL_3(\Z)\times
\GL_2(\Z)$-orbits on the space $W_\Z=(\Sym^2\Z^3\otimes\Z^2)^\ast$ of pairs
of integral ternary quadratic forms and the set of isomorphism classes
of pairs $(Q,R)$, where $Q$ is a quartic ring and $R$ is a cubic
resolvent ring of $Q$.  

Under this bijection, the discriminant of an
element $(A,B)\in(\Sym^2\Z^3\otimes\Z^2)^\ast$ is equal to the
discriminant of the corresponding quartic ring $Q$. Furthermore, the binary cubic form
corresponding to the cubic ring $R$ by the Delone--Faddeev
correspondence is $4\cdot\Det(Ax-By)$.
\end{theorem}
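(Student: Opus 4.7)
The plan is to exhibit the bijection explicitly in both directions and then verify equivariance.  Starting from $(A,B)\in W_\Z$, first produce the cubic resolvent by forming the binary cubic $f(x,y):=4\,\Det(Ax-By)\in\Z[x,y]$ and setting $R:=R(f)$ via the Delone--Faddeev correspondence recalled in~(\ref{ringlaw3}); this immediately realizes the Delone--Faddeev claim of the theorem.  To construct the quartic ring $Q$, choose a $\Z$-basis $\langle 1,\alpha_1,\alpha_2,\alpha_3\rangle$ and write down structure constants $\alpha_i\alpha_j=c_{ij}^{0}+c_{ij}^{1}\alpha_1+c_{ij}^{2}\alpha_2+c_{ij}^{3}\alpha_3$ as explicit polynomial expressions in the twelve entries $a_{ij},b_{ij}$.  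The correct choice is dictated by the resolvent condition: the induced quadratic map $\Sym^2(Q/\Z)\to R/\Z$, obtained by composing multiplication with a canonical projection $Q/\Z\to R/\Z$, should coincide with $A\otimes\omega+B\otimes\theta$ once the normal basis $\langle 1,\omega,\theta\rangle$ of $R$ is used to identify $R/\Z\cong\Z\omega\oplus\Z\theta$.  With the nonconstant $c_{ij}^{k}$ ($k\geq 1$) read off from $(A,B)$ and the ring law of $R$, the constants $c_{ij}^{0}$ are then pinned down by a trace-normalization condition analogous to the one used to obtain a normal basis in the cubic case.

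The central step, and the main obstacle, is to verify that this recipe actually defines a commutative associative ring.  Commutativity is built in by the symmetry of $A$ and $B$, but associativity $(\alpha_i\alpha_j)\alpha_k=\alpha_i(\alpha_j\alpha_k)$ must be checked separately.  Expanding both sides in the $c_{ij}^{k}$ and substituting the explicit expressions, the required identities reduce to polynomial identities in $a_{ij},b_{ij}$ that hold unconditionally once the relations of $R(f)$ (obtained by specializing~(\ref{ringlaw3}) to the coefficients of $4\,\Det(Ax-By)$) are used.  A parallel direct computation yields $\Disc(Q)=\Delta(A,B)$, matching the stated discriminant compatibility.

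For the inverse direction, given a pair $(Q,R)$, choose a normal basis $\langle 1,\omega,\theta\rangle$ of $R$ and a $\Z$-basis $\langle 1,\alpha_1,\alpha_2,\alpha_3\rangle$ of $Q$ compatible with the resolvent structure; the resulting $\Z$-linear map $\Sym^2(Q/\Z)\to R/\Z$ is an element of $(\Sym^2\Z^3\otimes\Z^2)^\ast$, which one reads off coordinate-wise as the pair $(A,B)$.  This inverts the forward construction on the nose.  Finally, equivariance is immediate: a change of $\Z$-basis of $Q/\Z$ by $g_3\in\GL_3(\Z)$ replaces $(A,B)$ by $(g_3Ag_3^t,g_3Bg_3^t)$, while a change of normal basis of $R/\Z$ modulo $\Z$ by $g_2\in\GL_2(\Z)$ acts on $(A,B)$ by the linear action of $g_2$ on the second factor.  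Together these confirm the stated bijection between $\GL_3(\Z)\times\GL_2(\Z)$-orbits on $W_\Z$ and isomorphism classes of pairs $(Q,R)$, with the discriminant and Delone--Faddeev assertions falling out of the construction.
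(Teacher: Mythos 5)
First, a point of context: the paper does not prove Theorem~\ref{first} at all --- it is imported verbatim from \cite{hcl3}, so there is no internal proof to compare against. Your outline does track the broad strategy of \cite{hcl3} (build $R$ from the Delone--Faddeev form $4\,\Det(Ax-By)$, build $Q$ by writing down structure constants dictated by the resolvent condition, check associativity and discriminants, then invert). But as written it is a sketch whose central steps are asserted rather than established, and one of those assertions mischaracterizes the construction. The resolvent map is not ``multiplication composed with a canonical projection $Q/\Z\to R/\Z$'' --- no such linear projection exists ($Q/\Z$ has rank $3$ and $R/\Z$ rank $2$); it is the genuinely quadratic map $x\mapsto x_1x_2+x_3x_4$ of equation~(\ref{phidef}), and the condition imposed is that this quadratic map, in suitable bases of $Q/\Z$ and $R/\Z$, be given by the pair $(A,B)$. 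Getting this right matters, because the entire content of the theorem is that this condition \emph{forces} the structure constants $c_{ij}^k$ ($k\geq 1$) to be specific quadratic expressions in the $2\times 2$ minors $a_{ij}b_{k\ell}-a_{k\ell}b_{ij}$, after which the $c_{ij}^0$ are pinned down by the associativity relations themselves (not merely by a trace normalization). Your proposal never exhibits these formulas, and the claim that associativity ``reduces to polynomial identities that hold unconditionally'' is precisely the nontrivial computation that constitutes the proof; nothing in your write-up indicates why those identities should hold.

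Two further gaps. For surjectivity/injectivity you say ``choose a $\Z$-basis of $Q$ compatible with the resolvent structure,'' which presupposes what must be shown: that for any pair $(Q,R)$ with $R$ a cubic resolvent (i.e.\ $\Disc(R)=\Disc(Q)$ and $R\supset R^{\mathrm{inv}}(Q)$), the quadratic map $\tilde\phi$ lands integrally in $R$ and its coordinate expression recovers a pair $(A,B)$ whose associated structure constants reproduce the multiplication of $Q$ --- this uniqueness of the structure constants given the resolvent data is what makes the two directions mutually inverse, and it is not addressed. Finally, you never verify that the ring $R(f)$ you construct from $4\,\Det(Ax-By)$ actually satisfies the defining conditions of a cubic resolvent of the constructed $Q$ (discriminant equality and containment of $R^{\mathrm{inv}}(Q)$). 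None of these points is fatal to the strategy --- it is the right strategy --- but in their absence the proposal is an outline of \cite{hcl3} rather than a proof.
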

We denote the quartic ring corresponding to a pair of integral ternary quadratic forms by $Q(A,B)$, and the associated cubic resolvent ring by $R(A,B)$. 

We recall from \cite{hcl3} that a {\it cubic resolvent ring} $R$ of a quartic ring $Q$ is an integral analogue of the classical cubic resolvent field of a quartic field.  In the case of an order in an $S_4$- or $A_4$-quartic field $K$, it may be defined as a subring of the {cubic invariant ring } $R^\inv(Q)$ of $Q$, which is an order in the cubic resolvent field of $K$.  More precisely, let $\widetilde K$ denote the Galois closure of the quartic field $K$ in a fixed algebraic closure $\bar Q$ of $\Q$, let $K_1=K,$ $K_2$, $K_3$, $K_4$ denote the conjugates of $K$ in $\widetilde K$ permuted by the Galois group $S_4$, and let $F$ denote the cubic subfield of $\widetilde K$ fixed by $D_4=\langle (12),(1324)\rangle$. Then there is a natural map
\begin{equation}\label{phidef}
\tilde\phi(x) = x_1 x_2 + x_3 x_4
\end{equation}
from $K$ to $F$, used in the classical solution to the quartic
equation, where we have used $x_1,x_2,x_3,x_4$ to denote the conjugates
of $x$ in $\widetilde K$.  

The {\it cubic invariant ring} $R^\inv(Q)$ is defined as $\Z[\{x_1x_2+x_3x_4:x\in Q\}]$, i.e.,  the order in $F$ generated by $x_1x_2+x_3x_4$ over all $x\in Q$. Even when $Q$ is an order in a quartic field that is not $A_4$- or $S_4$-, we may still define $R^\inv(Q)$ as a cubic ring in a fixed formal Galois closure of $K$; see \cite{hcl3}. 

A cubic resolvent ring $R$ of the quartic ring $Q$ is then defined as follows. 

\begin{definition}
{\em Let $Q$ be a quartic ring, and $R^{inv}(Q)$ its cubic invariant ring.  
A {\it cubic resolvent ring} of $Q$ is 
a cubic ring $R$ such that $\Disc(Q)=\Disc(R)$ and 
$R\supset R^{inv}(Q)$.}
\end{definition}
We thus have a natural quadratic map $\tilde\psi(x):Q\to R$ for any quartic ring $Q$ and cubic resolvent ring $R$, defined by (\ref{phidef}), which evidently descends to a quadratic map $\psi(x):Q/\Z\to R/\Z$, since $\psi(x+c)=\psi(x)+c\Tr(x)+2c^2$ for any $x\in Q$ and $c\in\Z$. 

In coordinate-free language, Theorem \ref{first} states that isomorphism
classes of pairs $(Q,R)$---where $Q$ is a quartic ring and $R$ is a cubic 
resolvent---are in natural bijection with isomorphism classes of quadratic maps
$\phi:M\rightarrow L,$
where $M$ and $L$ are free $\Z$-modules having
ranks 3 and 2 respectively.  Under this bijection we have 
that $M=Q/\Z$ and $L=R/\Z$.  



Important to us is the following theorem on the existence of cubic resolvent rings, and the uniqueness of cubic resolvent rings in the primitive case:

\begin{theorem}[\cite{hcl3}]\label{monqr}
Every quartic ring has at least
one cubic resolvent ring.  In general, a quartic ring of content $k$ has $\sigma(k)$ cubic resolvent rings, where $\sigma$ denotes the sum-of-divisors function. Hence every primitive quartic ring has a unique
cubic resolvent ring.

In particular, every monogenic quartic ring $Q=\Z[x]/(x^4+bx^3+cx^2+dy+e)$ has a unique cubic resolvent ring $R$, and it is given by $R=\Z[x]/(x^3-cx^2+(bd-4e)x+4ce-d^2-b^2e)$. 
\end{theorem}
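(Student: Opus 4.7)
The plan is to deduce Theorem~\ref{monqr} from the parametrization in Theorem~\ref{first}. Under that bijection, the cubic resolvents of a given quartic ring $Q$ correspond precisely to the $\GL_3(\Z)\times\GL_2(\Z)$-orbits on $W_\Z$ whose associated quartic ring $Q(A,B)$ is isomorphic to $Q$, so the existence, uniqueness, and counting statements all reduce to orbit-counting questions on~$W_\Z$.

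For existence, given a quartic ring $Q$ with any normal $\Z$-basis $\langle 1,\alpha_1,\alpha_2,\alpha_3\rangle$, I would read off the structure constants of multiplication in $Q$ and assemble them into a pair $(A,B)\in W_\Z$ with $Q(A,B)\cong Q$ via the explicit recipe in \cite{hcl3}; Theorem~\ref{first} then produces a cubic resolvent. For the count, I would write $Q=\Z+kQ'$ for the unique primitive quartic ring $Q'$ of content~$1$. The pair $(A_0,B_0)$ associated to $Q'$ is unique up to orbit, and the pairs producing $Q$ are obtained from $(A_0,B_0)$ by a controlled family of deformations parametrized by divisors of $k$: each divisor $d\mid k$ corresponds to one way of rescaling the $\GL_2$-factor so as to adjust the cubic resolvent while preserving the quartic ring. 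This produces exactly $\sigma(k)$ orbits, hence $\sigma(k)$ resolvents; in particular, a primitive $Q$ (that is, $k=1$) has a unique cubic resolvent.

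For the explicit formula in the monogenic case, first observe that $Q=\Z[x]/(x^4+bx^3+cx^2+dx+e)$ is primitive, since $Q/\Z=\Z x\oplus\Z x^2\oplus\Z x^3$ is not contained in $m$ times any rank-$3$ lattice for $m\geq 2$; hence $Q$ has a unique cubic resolvent by the previous step. To identify that resolvent as $R=\Z[y]/(g(y))$ with $g(y)=y^3-cy^2+(bd-4e)y+(4ce-d^2-b^2 e)$, I would verify the two defining properties: (i)~$\Disc(R)=\Disc(g)=\Disc(f)=\Disc(Q)$, which is the classical identity that the resolvent cubic and the quartic share a discriminant; and (ii)~$R\supseteq R^{\inv}(Q)$, which follows because $\tilde\psi(\alpha)=\alpha_1\alpha_2+\alpha_3\alpha_4$ is a root of $g$ by Vieta's formulas applied to $f$, and $R^{\inv}(Q)$ is by definition generated by such elements. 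The uniqueness of the cubic resolvent then identifies $R$ as \emph{the} cubic resolvent of $Q$.

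The main obstacle I foresee is the $\sigma(k)$ count in the non-primitive case: one must establish that each divisor $d\mid k$ yields a distinct resolvent and, conversely, that every resolvent of $Q$ arises from such a deformation. The existence and the monogenic formula are comparatively direct, following respectively from the explicit construction in \cite{hcl3} and from a Vieta-type classical computation once uniqueness is in hand.
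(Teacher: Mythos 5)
The paper offers no proof of Theorem~\ref{monqr}; it is imported wholesale from \cite{hcl3}, so your proposal has to be judged against the arguments there. Your framing --- reduce everything to counting $\GL_3(\Z)\times\GL_2(\Z)$-orbits on $W_\Z$ with prescribed associated quartic ring, then handle the monogenic case by checking the two defining properties of a resolvent and invoking uniqueness --- is the right skeleton, and the discriminant identity $\Disc(g)=\Disc(f)$ for the classical resolvent cubic is indeed how the explicit formula is pinned down once uniqueness is known.

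There is, however, a genuine gap in the $\sigma(k)$ count, and it is not merely the difficulty you flagged: the mechanism you propose cannot give the right answer. If each divisor $d\mid k$ contributed ``one way of rescaling the $\GL_2$-factor,'' you would obtain $\tau(k)$ resolvents (the \emph{number} of divisors), not $\sigma(k)=\sum_{d\mid k}d$ (their \emph{sum}). In \cite{hcl3} the resolvents of $Q=\Z+kQ'$ are classified not by divisors but by index-$k$ sublattices of the rank-two lattice $R'/\Z$ attached to the unique resolvent $R'$ of the primitive ring $Q'$ (equivalently, by subgroups of $\Z^2$ of index $k$), and it is the count of \emph{those} that equals $\sigma(k)$; a one-parameter-per-divisor deformation misses all the sublattices that are not of the form $d\Z\oplus(k/d)\Z$ in a fixed basis. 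Two smaller points: the existence assertion is the hardest part of \cite{hcl3} --- one must show that the system expressing the structure constants $\lambda_k^{ij}$ of an arbitrary quartic ring in terms of the entries of a pair $(A,B)$ always admits an integral solution, which is far more than ``reading off'' a multiplication table; and in verifying $R\supseteq R^{\inv}(Q)$ it does not suffice to check that $\tilde\psi(\alpha)$ is a root of $g$, since $R^{\inv}(Q)$ is generated by $\tilde\psi(x)$ for \emph{all} $x\in Q$, so one must also verify that $\tilde\psi(\alpha^2)$, $\tilde\psi(\alpha^3)$, and the values of the associated bilinear form on pairs of basis elements all lie in $\Z[\tilde\psi(\alpha)]$.
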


\subsection{The parametrization of quartic rings with monogenic cubic resolvent rings}

We now turn to the parametrization of quartic rings having monogenic cubic resolvent rings, as developed by Wood~\cite{Wood}.  Suppose $R=R(A,B)$ is a monogenic cubic ring, where $(A,B)$ is a pair of integral ternary quadratic forms. If $f(x,y)=ax^3+bx^2y+cxy^2+dy^3$ denotes the integral binary cubic form $4\cdot\Det(Ax-By)$, then the fact that $R=R(f)$ is monogenic implies that $f$ represents 1.  By a change of variables in $\GL_2(\Z)$, we may thus assume $a=1$, i.e., $4\cdot\Det(A)=1$. 

Now if $4\cdot\Det(A)=1$, then $A$ is $\GL_3(\Z)$ equivalent to the matrix 
\begin{equation}\label{splitA} A_1:=
\left[ \begin{array}{ccc}\phantom0 & \phantom0 & 1/2
 \\ \phantom0 & -1 & \phantom0 \\ 1/2 & \phantom0 & \phantom0 \end{array} \right];
 \end{equation}
 see~\cite{Wood}. Therefore, if $R(A,B)$ is a monogenic cubic ring with multiplication table given by~(\ref{ringlaw3}) and monogenizer $\omega$, so that $f(1,0)=1$, then by a transformation in $\GL_2(\Z)\times
\GL_3(\Z)$, we may assume that $A$ is the quadratic form $A_1$ given by (\ref{splitA}). The quadratic form $B$ is then determined up to transformations in $\SO_{A_1}(\Z)$ (the orthogonal group of $A_1$, so that $A$ remains fixed as $A_1$), in $\{\pm1\}$ (which acts by either fixing or negating $B$), and in $F_{\Z,1}$, the group of unipotent lower triangular transformations over $\Z$ (which adds multiples of $A$ to $B$).  By a transformation in $F_{\Z,1}$, we may assume that the upper right entry of $B$ is zero.

Thus we obtain a representation of $\{\pm1\}\times \SO_{A_1}(\Z)$ on integral ternary quadratic forms $B$ whose top right entry is zero. In particular, we have a five-dimensional representation of $\SO_{A_1}(\Z)$.  Now the quadratic form in (\ref{splitA}) is $A_1(p,q,r)=q^2-pr$, which is a scalar multiple of the discriminant of the binary quadratic form $H(x,y)=px^2+2qxy+ry^2$.  Thus $\SO_{A_1}(\Z)$ is isomorphic to $\PGL_2(\Z)$, and our five-dimensional representation, being irreducible, must be isomorphic to the space of binary quartic forms when viewed as a representation of $\PGL_2$.

We can see this isomorphism explicitly, as follows.  Let $V_\Z$ denote the space of integral binary quartic forms.  Then $V_\Z$ embeds into $W_\Z$ 
via the map $\psi$ defined by
\begin{equation}\label{vtow}
\psi: ax^4+bx^3y+cx^2y^2+dxy^3+ey^4\mapsto \left( \left[ \begin{array}{ccc}\phantom0 & \phantom0 & 1/2
 \\ \phantom0 & -1 & \phantom0 \\ 1/2 & \phantom0 & \phantom0 \end{array} \right],
\left[ \begin{array}{ccc} a & b/2 & 0 \\ b/2 & c & d/2 \\ 0 & d/2 & e \end{array} \right]\right).
\end{equation}
We denote 
the set of all pairs $(A_1,B)$ of ternary quadratic forms in $W_\Z$ 
by $W_{\Z,1}$. The group
$F_{\Z,1}\times\{\pm1\}\times \SO_{A_1}\subset\GL_2(\Z)\times\SL_3(\Z)$ preserves
$W_{\Z,1}$.
We also note that the
map $\psi$ is {\it discriminant preserving}, i.e., the discriminant of
an element of $V_\Z$ is equal to the discriminant of its image in
$W_\Z$.  For a binary quartic form $f$, if we write
$\psi(f)=(A_1,B)$, then we
call the binary form $\Det(A_1x-By)$ the 
{\it cubic resolvent form} of $f$; note that this form is {\it monic},
i.e., its leading coefficient as a polynomial in $x$ is 1. 

Next, we observe that every 
$F_{\Z,1}$-equivalence class of $W_{\Z,1}$ contains a unique element
$(A_1,B)$ such that the top right entry of $B$ is equal to $0$.
It follows that $\psi$ maps the space of binary quartic forms $V_\Z$ bijectively to
the set of $F_{\Z,1}$-orbits on $W_{\Z,1}$ via the
composite map
$$V_\Z\rightarrow W_{\Z,1}\rightarrow F_{\Z,1}\backslash W_{\Z,1}.$$

We may ask how the action of $\GL_2(\Z)$ on $V_\Z$ explicitly manifests itself (via $\psi$) as an action on $F_{\Z,1}\backslash W_{\Z,1}$.
To answer this, note that the center of $\GL_2(\Z)$ acts trivially on its representation on
binary quadratic forms $px^2-2qxy+ry^2$ via $\gamma\cdot
f(x,y):=f((x,y)\cdot\gamma)/(\det \gamma)$.
This action of $\GL_2(\Z)$ preserves the discriminants $4(q^2-pr)$ of these binary quadratic forms, yielding the map
\begin{equation}\label{eqtwistedaction1}
\begin{array}{rcl}
\rho:\PGL_2(\Z)&\rightarrow&\SL_3(\Z),\;\; \mbox{given explicitly by}\\[.1in]
{\left(\begin{array}{cc} a & {b} \\ c & d \end{array}\right)}&\mapsto& \displaystyle\frac{{1}}{ad-bc}{
\left(\begin{array}{ccc} {d^2} & {cd} & {c^2} \\ {2bd} & {ad+bc} & {2ac} \\ {b^2} & {ab} & {a^2}
\end{array}\right).}
\end{array}
\end{equation}
Since $A_1$ is the Gram matrix of the ternary form
$q^2-pr$, we see that
the image of $\PGL_2(\Z)$ is contained in the orthogonal group
$\SO_{A_1}(\Z)$, and is in fact equal to it (see \cite{Wood}).

For any base ring $T$, let $V_T$ denote the space of binary quartic forms
with coefficients in $R$. The center of $\GL_2(T)$ acts trivially under
the ``twisted action'' of $\GL_2(T)$ on $V_T$ defined by 
\begin{equation}\label{eqtwistedaction}
\gamma\cdot f(x,y):=(\det\gamma)^{-2}f((x,y)\gamma),  
\end{equation}
yielding an action of $\PGL_2(R)$ on $V_T$. Note that the
$\PGL_2(\Z)$-orbits on $V_\Z$ are the same as the $\GL_2(\Z)$-orbits
on $V_\Z$, since
$\left(\begin{smallmatrix}{-1}&{}\\{}&{-1}\end{smallmatrix}\right)\in\GL_2(\Z)$
acts trivially on $V_\Z$.

It is now easily checked that $\psi(\gamma\cdot f)$ and
$\rho(\gamma)\cdot\psi(f)$ are the same element in $F_{\Z,1}\backslash
W_{\Z,1}$ for all $\gamma\in\PGL_2(\Z)$ and $f\in V_\Z$. Therefore,
we have the following theorem. 

\begin{theorem}\label{thmvztowz}
  The map $\psi$ defined by $(\ref{vtow})$ gives a canonical bijection
  between $\PGL_2(\Z)$-orbits on $V_\Z$ and $F_{\Z,1}\times {\rm
    SO}(A_1,\Z)$-orbits on $W_{\Z,1}$.
\end{theorem}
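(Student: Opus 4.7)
The plan is to verify that the induced map
\[
\bar\psi : V_\Z / \PGL_2(\Z) \longrightarrow W_{\Z,1} / (F_{\Z,1}\times \SO_{A_1}(\Z))
\]
is well-defined, surjective, and injective, assembling the three ingredients already provided in the preceding setup.

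For well-definedness, I would invoke the intertwining relation noted just above the theorem: for every $\gamma\in\PGL_2(\Z)$ and $f\in V_\Z$, the pairs $\psi(\gamma\cdot f)$ and $\rho(\gamma)\cdot\psi(f)$ represent the same class in $F_{\Z,1}\backslash W_{\Z,1}$. Since $\rho(\gamma)\in\SO_{A_1}(\Z)$, this says $\psi(f)$ and $\psi(\gamma\cdot f)$ lie in a common $F_{\Z,1}\times \SO_{A_1}(\Z)$-orbit, so $\bar\psi$ is well-defined. (One should also check that $F_{\Z,1}$ and $\SO_{A_1}(\Z)$ act compatibly on $W_{\Z,1}$, which is immediate because $F_{\Z,1}$ adds multiples of $A_1$ to $B$, and any $g\in\SO_{A_1}(\Z)$ fixes $A_1$ via $gA_1g^t=A_1$, so the two actions commute on $W_{\Z,1}$.) For surjectivity, every $F_{\Z,1}$-orbit on $W_{\Z,1}$ contains a unique representative $(A_1,B)$ whose top-right entry vanishes, and every such representative lies in $\psi(V_\Z)$ by the explicit definition in $(\ref{vtow})$; thus every $F_{\Z,1}\times\SO_{A_1}(\Z)$-orbit already meets $\psi(V_\Z)$.

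For injectivity, suppose $\psi(f')=u\sigma\cdot\psi(f)$ for some $u\in F_{\Z,1}$ and $\sigma\in\SO_{A_1}(\Z)$. By the surjectivity of $\rho:\PGL_2(\Z)\to \SO_{A_1}(\Z)$ recalled above, I write $\sigma=\rho(\gamma)$ for some $\gamma\in\PGL_2(\Z)$. Invoking the intertwining relation again, $\rho(\gamma)\cdot\psi(f)$ is $F_{\Z,1}$-equivalent to $\psi(\gamma\cdot f)$; combined with $u\in F_{\Z,1}$, the elements $\psi(f')$ and $\psi(\gamma\cdot f)$ represent the same class in $F_{\Z,1}\backslash W_{\Z,1}$. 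The composite $V_\Z\to W_{\Z,1}\to F_{\Z,1}\backslash W_{\Z,1}$ was already shown to be a bijection, so $f'=\gamma\cdot f$, and $f,f'$ lie in the same $\PGL_2(\Z)$-orbit.

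I do not anticipate a serious obstacle, since every nontrivial input—the bijection $V_\Z\cong F_{\Z,1}\backslash W_{\Z,1}$, the surjectivity of $\rho$ onto $\SO_{A_1}(\Z)$, and the intertwining of $\psi$ with $\rho$ modulo $F_{\Z,1}$—has already been established. The theorem is essentially a formal consequence of packaging these three facts. The only point requiring a moment of care is that the intertwining holds a priori only modulo $F_{\Z,1}$ on the $W$-side; this is handled uniformly by passing to the quotient $F_{\Z,1}\backslash W_{\Z,1}$ and invoking the bijection with $V_\Z$ at both ends of the argument.
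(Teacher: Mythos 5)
Your proposal is correct and follows essentially the same route as the paper, which establishes the theorem by exactly the three facts you assemble: the bijection $V_\Z\to F_{\Z,1}\backslash W_{\Z,1}$ via the normal form with vanishing top-right entry of $B$, the surjectivity of $\rho:\PGL_2(\Z)\to\SO_{A_1}(\Z)$, and the compatibility $\psi(\gamma\cdot f)=\rho(\gamma)\cdot\psi(f)$ in $F_{\Z,1}\backslash W_{\Z,1}$. You merely spell out the formal well-defined/surjective/injective bookkeeping that the paper leaves implicit.
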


Finally, one checks that, for an integral binary quartic form $f$, the invariant order of the form $f$ is indeed isomorphic to the quartic ring $Q(\psi(f))$ associated to the pair of ternary quadratic forms $\psi(f)$.  Moreover, negating $B$ in a pair $(A_1,B)$ negates the monogenizer $\beta$ of the corresponding cubic resolvent ring $R$ in the associated pair $(Q,R)$; and negating $B$ corresponds to negating $f$ in the corresponding binary quartic form.  Since we are considering $\beta$ and $-\beta$ as equivalent monogenizers of $R=\Z[\beta]$, we obtain the following theorem due to Wood (see~\cite{Wood} for more details):

\begin{theorem}[Wood~\cite{Wood}]\label{woodthm}
The orbits of $\{\pm1\}\times \PGL_2(\Z)$ on the space $V_\Z$ of integral binary quartic forms $f$ are in natural bijection with
isomorphism classes of triples $(Q,R,\beta)$, where $Q$ is a quartic ring, $R$ is a monogenic cubic resolvent ring 
of $Q$, and $\beta$ is a {\it monogenizer} of~$R$ up to equivalence. Under this bijection, the invariant order of a binary quartic form $f$ is isomorphic to the quartic ring $Q$ in the corresponding triple $(Q,R,\beta)$.
\end{theorem}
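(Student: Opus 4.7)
The plan is to deduce Wood's theorem by combining Theorem~\ref{first} (which parametrizes pairs $(Q,R)$ by $\GL_3(\Z)\times\GL_2(\Z)$-orbits on $W_\Z$) with Theorem~\ref{thmvztowz} (which relates $V_\Z$ to $F_{\Z,1}\times\SO_{A_1}(\Z)$-orbits on $W_{\Z,1}$), and then reinterpreting the normalization $A=A_1$ as the choice of a monogenizer of the cubic resolvent $R$.

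First, I would extend the bijection of Theorem~\ref{thmvztowz} to incorporate the $\{\pm 1\}$-action on $V_\Z$ (given by $f\mapsto -f$) and the corresponding $\{\pm 1\}$-action on $W_{\Z,1}$ (given by $(A_1,B)\mapsto(A_1,-B)$). Both actions preserve $A_1$ and commute with the symmetries already in play, so the bijection of Theorem~\ref{thmvztowz} extends to one between $\{\pm1\}\times\PGL_2(\Z)$-orbits on $V_\Z$ and $\{\pm 1\}\times F_{\Z,1}\times\SO_{A_1}(\Z)$-orbits on $W_{\Z,1}$.

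Next, I would apply Theorem~\ref{first} to an element $(A_1,B)\in W_{\Z,1}$. The associated cubic resolvent $R$ corresponds under the Delone--Faddeev correspondence to the binary cubic form $4\,\Det(A_1x-By)$, whose leading coefficient is $4\,\Det(A_1)=1$. It then follows from the cubic case already analysed in the excerpt that the first basis vector $\omega$ of $R/\Z$ in the normal basis built from this form is a monogenizer of $R$. I would then verify that the data of a $\{\pm 1\}\times F_{\Z,1}\times\SO_{A_1}(\Z)$-orbit on $W_{\Z,1}$ records exactly a triple $(Q,R,\beta)$ with $\beta$ a monogenizer of $R$ up to the equivalence $\beta\sim\pm\beta+c$: the subgroup $\SO_{A_1}(\Z)\cong\PGL_2(\Z)$ acts as internal symmetries that do not change the isomorphism class of the triple, $F_{\Z,1}$ shifts $\beta$ by integer constants, and $\{\pm 1\}$ negates $\beta$. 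Composing with Step~1 then yields the claimed correspondence.

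The main obstacle is this last identification: showing that the subgroup $\{\pm 1\}\times F_{\Z,1}\times\SO_{A_1}(\Z)$ sitting inside $\GL_3(\Z)\times\GL_2(\Z)$ realises, under Theorem~\ref{first}, precisely the equivalence on triples $(Q,R,\beta)$ and no further identifications. The cleanest route is to trace through the construction of $(Q,R)$ from $(A_1,B)$ in~\cite{hcl3}: fixing $A=A_1$ singles out the basis element $\omega\in R/\Z$; the $F_{\Z,1}$-action and the $\{\pm 1\}$-action change $\omega$ by $\omega\mapsto\pm\omega+c$ and nothing else; and $\SO_{A_1}(\Z)$ acts only on the $Q/\Z$ side of the quadratic map $\phi:Q/\Z\to R/\Z$ encoding $(Q,R)$, leaving the isomorphism class of the triple invariant. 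Finally, to check that the invariant order $\OO_f$ agrees with $Q(\psi(f))$, I would compare the Nakagawa multiplication table for $\OO_f$ against the one Theorem~\ref{first} attaches to $\psi(f)$.
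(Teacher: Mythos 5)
Your proposal follows essentially the same route as the paper: the paper likewise obtains Theorem~\ref{woodthm} by combining Theorem~\ref{first} with Theorem~\ref{thmvztowz}, interpreting the normalization $A=A_1$ as fixing a monogenizer $\omega$ of $R$, adjoining the $\{\pm1\}$-action (negating $B$ corresponds to negating both $f$ and $\beta$), and checking that $\OO_f\cong Q(\psi(f))$, with full details deferred to Wood~\cite{Wood}. One small correction to your description of the group actions: $F_{\Z,1}$ fixes $\omega\in R/\Z$ and only changes the complementary basis vector $\theta$ by multiples of $\omega$ (the identification $\beta\sim\beta+c$ is already absorbed in passing from $R$ to $R/\Z$), but this does not affect the resulting classification.
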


\subsection{Conclusion: Proofs of Theorems~\ref{main1} and \ref{main2}}

We begin by bounding the number of classes of binary quartic forms that can yield a given quartic order $Q$ as its invariant order.  

First, we may reduce to the case of primitive quartic orders and primitive binary quartic forms.  Indeed, we have $Q=\OO_f$ if and only if $Q'=\OO_{f/c(f)}$, where $Q'$ is the primitive quartic ring satisfying $Q=\Z+c(Q)Q'$.
Hence the number of classes of $f$ such that $Q=\OO_f$ is equal to the number of classes of primitive forms $f'$ such that $Q'=\OO_{f'}$.  

Second, by Theorem~\ref{woodthm}, given a primitive quartic order $Q'$, the number of classes of integral binary quartic forms, up to negation, whose invariant order is isomorphic to $Q'$ is given by the number of pairs $(R',\beta)$, where $R'$ is a cubic resolvent ring of $Q'$ and $\beta$ is a monogenizer of~$R'$.  By Theorem~\ref{monqr}, such a cubic resolvent $R'$ is unique since $Q'$ is primitive.  
Furthermore, by Theorem~\ref{cubmon}, the number of possible monogenizers $\beta$ of $R'$ is at most 10 (and at most 7 if $|\Disc(Q)|=|\Disc(R)|$ is sufficiently large).  We have proven Theorem~\ref{main2}. 

\vspace{.1in}
We now consider the number of possible monogenizers of a quartic order $Q$.   Suppose $Q= \Z[\alpha]$ for some monogenizer $\alpha\in Q$.  Let $g(x)$ be the (monic integral) characteristic polynomial of the map $Q\!\xrightarrow{\!\times \alpha\!}  \!Q$, so that $Q\cong \Z[x]/(g(x))$, and let $h(x,y)$ be the binary quartic form that is the homogenization of $g$. If $R=R(f)$ is the (unique) cubic resolvent ring of $R$ with corresponding monogenizer $\beta$ as in Theorem~\ref{monqr}, then the triple $(Q,R,\beta)$ corresponds to the $\PGL_2(\Z)$-orbit of the integral binary quartic form $h(x,y)$ under the bijection of Theorem~\ref{woodthm}. 
The number of distinct monogenizers that lead to the same triple $(Q,R,\beta)$, via the homogenization of its characteristic polynomial, is then clearly the number of representations of $\pm1$ by $h$ (i.e., the number of monic $g(x)$, up to transformations $x\mapsto \pm x+c$ with $c\in\Z$, whose homogenization is $\PGL_2(\Z)$-equivalent to $h(x,y)$). 

On the other hand, we have seen above in the proof of Theorem~\ref{main2} that the number of different triples $(Q,R,\beta)$ for a given monogenic quartic order $Q$ is at most the number of monogenizations of its cubic resolvent ring $R=R(f)$, i.e., the number of representations of~1 by the binary cubic form $f$ corresponding to $R$.  

Thus the number of monogenizations of $Q$ is equal to the number of representations of~$\pm1$ by the binary quartic form $h$ corresponding to the triple $(Q,R,\beta)$, summed over all possible monogenizers $\beta$ of the cubic resolvent ring $R=R(f)$ of $Q$, which are in turn indexed by the representations of $1$ by the binary cubic form~$f$!  

In particular, we may bound the number of possible monogenizations of a quartic order $Q$ by the product of the maximal possible number of solutions to a quartic Thue equation $h(x,y)=\pm1$ (where the solutions $(x,y)$ and $(-x,-y)$ are considered equivalent) and the maximal possible number of solutions to a cubic Thue equation $f(x,y)=1$.  

The best known current bounds are due to Akhtari, and are proven in the Appendix:

\begin{theorem}[Akhtari]\label{SSthm}
An integral binary quartic form $f$ of nonzero discriminant represents $\pm1$ at most $276$ times, and at most $26$ times if $f$ has sufficiently large absolute discriminant. 
\end{theorem}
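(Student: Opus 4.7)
The strategy is to fix an integral binary quartic form $f(x,y) = a_0 \prod_{i=1}^{4}(x - \theta_i y)$ of nonzero discriminant and to count, up to the equivalence $(x,y)\sim(-x,-y)$, the integer solutions to $f(x,y) = \pm 1$. The plan is to split solutions into a ``small'' regime, where $\max(|x|,|y|)$ is bounded by a computable threshold depending on $\Disc(f)$, and a ``large'' regime handled by a gap principle. For the small regime, the computational results of Bennett--Rechnitzer~\cite{BR} exhaustively enumerate solutions to all relevant Thue equations below the threshold (or reduce to finitely many inequivalent forms), giving an explicit finite bound; the bound $276$ will arise from the worst case over this finite list combined with the large-solution count.

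For the large regime, classify each solution $(x,y)$ by the index $i\in\{1,2,3,4\}$ for which $|x-\theta_i y|$ is smallest. Since $\prod_i |x-\theta_i y| = 1/|a_0|$, such a smallest factor must exist and be small once $|y|$ is large, and the ratio $x/y$ then rationally approximates $\theta_i$. Within each class, I would apply the standard exponential gap principle (already present in Evertse--Gy\H{o}ry and sharpened in Akhtari's prior work~\cite{Akhsmallm, Akh10}): among any three solutions of the same type $i$, the heights $|y|$ must grow at least geometrically with an explicit ratio. This yields a bound of the form $C_1 + C_2 \log\log|\Disc(f)|$ on each class, and combined with the hypergeometric (Thue--Siegel) method to control the very best approximations, each of the four classes contributes at most a small bounded number of solutions. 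Summing over the four classes produces the uniform large-discriminant bound of $26$.

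To go from $26$ (for large $|\Disc(f)|$) to $276$ unconditionally, the remaining task is to handle forms whose discriminant lies below the threshold where the hypergeometric/gap argument becomes effective. Here one invokes Akhtari's previous explicit work on quartic Thue equations of small degree parameters together with the Bennett--Rechnitzer database: for each of the finitely many $\GL_2(\Z)$-equivalence classes of quartic forms with small enough discriminant, the number of representations of $\pm 1$ is either read off directly or bounded by a separate argument specific to reducible or nearly-degenerate forms. One also needs to treat separately the possibility that $f$ is reducible over $\Q$ (splitting off linear or quadratic factors), where elementary analysis suffices.

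The main obstacle is quantitative sharpness: to get as low as $276$ and $26$, one must carefully optimize the constants in the gap principle (both the base of the exponential gap and the size of the ``exceptional'' interval where the principle fails), balance the hypergeometric bound against the Baker-type upper bound on $|y|$, and verify that the transition threshold between ``small'' (handled by Bennett--Rechnitzer) and ``large'' (handled analytically) is consistent in both directions so that no solutions are double-counted or missed. The reducible and imprimitive cases, as well as forms with unusually many automorphisms over $\Z$, will also require separate bookkeeping to ensure the stated bounds remain valid.
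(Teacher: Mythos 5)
There is a genuine gap in your proposal: you have no mechanism for handling the enormous intermediate range of discriminants. The two inputs you correctly identify --- the Bennett--Rechnitzer computations and Akhtari's explicit analytic bound --- cover, respectively, $|\Disc(f)|\le 10^6$ (where the computations show at most $8$ solutions) and $|\Disc(f)| \ge D_1^{1/(1-\kappa)} \approx 2.7\times 10^{80}$ with $D_1=(3.5)^{12}4^4$ (where \cite[Theorem~1.2]{Akhsmallm} gives at most $\lfloor 36+8/\kappa\rfloor \ge 44$ solutions). Your plan to ``read off'' the remaining cases from the database fails because the database does not reach beyond $10^6$, while the gap-principle/hypergeometric machinery does not become effective until the discriminant is astronomically larger; nothing in your outline covers $10^6 < |\Disc(f)| < 2.7\times 10^{80}$.

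The paper's proof bridges this range with a specific device that is absent from your sketch. For a positive integer $r$, the $\psi(r)$ sublattices $L_i\subset\Z^2$ with $\Z^2/L_i\cong\Z/r\Z$ cover $\Z^2$, and restricting $f$ to $L_i$ produces a quartic form $f_{A_i}$ with $|\Disc(f_{A_i})| = r^{12}\,|\Disc(f)|$. Hence any form with $|\Disc(f)|\ge C$ has its solution count bounded by $\psi(r)$ times the bound valid for forms of absolute discriminant at least $r^{12}C$, which for a suitable $r$ lands inside the range where the analytic theorem applies. With $C=10^6$ and the optimal choice $r=3$ (giving $\kappa\approx 0.237$), this yields $\psi(3)\cdot\lfloor 36+8/0.237\rfloor = 4\cdot 69 = 276$; your ``worst case over a finite list combined with the large-solution count'' offers no route to this number. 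The large-discriminant bound of $26$ is simply quoted from Akhtari's earlier work \cite{Akh10}, so that part of your outline is consistent with the paper, but the unconditional bound of $276$ cannot be reached by the argument you describe.
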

Noting that $276\times 10=2760$ and $26\times 7=182$ thus yields Theorem~\ref{main1}.

\subsection*{Acknowledgements}

I am extremely grateful to Shabnam Akhtari, Levent Alpoge, K\'alm\'an Gy\H{o}ry, Attila Peth\H{o}, and Ari Shnidman for helpful discussions and comments on earlier versions of this manuscript.  I also thank Shabnam Akhtari for the beautiful results proven in the Appendix below on the number of solutions to quartic Thue equations. The author was partially supported by a Simons Investigator Grant and NSF Grant DMS-1001828.

\begin{appendices}

\section{(by Shabnam Akhtari\footnote{Supported by NSF Grant DMS-2001281.})}

The purpose of this Appendix is to combine results from \cite{Akhsmallm, Akh10, BR} to conclude the following.
\begin{theorem}\label{main26}
A quartic Thue equation $f(x , y) = \pm 1$ has at most $276$ integer solutions and, moreover, if the absolute discriminant of $f(x , y)$ is sufficiently large, then $f(x , y) = \pm 1$ has at most $26$ integer solutions, where $(x,y)$ and $(-x,-y)$ are considered the same solution.
\end{theorem}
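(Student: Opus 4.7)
The plan is to combine three existing strands of work on quartic Thue equations: Akhtari's analytic bounds for forms of sufficiently large discriminant \cite{Akh10}, her refined analysis for the "small" regime \cite{Akhsmallm}, and the computational enumeration of Bennett--Rechnitzer \cite{BR}. The argument partitions the space of integral binary quartic forms (up to $\GL_2(\Z)$-equivalence, which preserves representation counts) according to an invariant such as the absolute discriminant or the Mahler measure, and applies a different technique in each regime. Throughout, I would work with reduced forms in a fundamental domain so that the coefficients of $f$ are controlled in terms of $|\Disc(f)|$, and one may further split into cases based on the signature of $f(x,1)$ over $\R$ (totally real, two real roots, or no real roots), since the hypergeometric and gap arguments behave quite differently in each case.

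For the asymptotic bound of $26$, I would invoke the main theorem of \cite{Akh10}: for $|\Disc(f)|$ larger than an explicit threshold, the Thue--Siegel method via hypergeometric Pad\'e approximations to the real roots of $f(x,1)$ yields an effective irrationality measure, which combined with a gap principle between successive solutions $(x_i,y_i)$ (ordered by $\max(|x_i|,|y_i|)$) limits the number of "large" solutions to a small explicit constant. A separate elementary argument bounds the "small" solutions using the shape of the reduced form. Summing over the signature cases and identifying $(x,y)$ with $(-x,-y)$ gives~$26$.

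For the global bound of $276$, the hypergeometric method ceases to produce such a small constant when $|\Disc(f)|$ is small, so the finitely many remaining reduced forms must be handled separately. Here \cite{Akhsmallm} supplies explicit bounds for forms whose invariants lie in an intermediate range, and \cite{BR} supplies the exhaustive computation of solutions for the genuinely small forms that lie below the threshold of the analytic arguments. The worst case across all three regimes yields $276$; again the identification $(x,y) \sim (-x,-y)$ is accounted for in the tabulation.

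The main obstacle is matching the thresholds: the discriminant bound above which the hypergeometric machinery delivers the constant $26$ must be low enough that the complementary finite list of reduced forms is actually covered by the computational ranges in \cite{BR} and the analytic ranges in \cite{Akhsmallm}. Secondary technical points are the careful tracking of the signature cases (since the number of "very large" solutions behaves differently for each signature) and ensuring that reduction theory over $\GL_2(\Z)$ together with the involution $(x,y) \mapsto (-x,-y)$ is applied consistently, so that no solution is either double-counted or lost in the transition between regimes.
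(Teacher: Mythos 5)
Your outline correctly identifies the three ingredients (the asymptotic bound $26$ from \cite{Akh10}, the explicit large-discriminant bounds of \cite{Akhsmallm}, and the computations of \cite{BR}), and the $26$ part of the statement is indeed just a citation of \cite{Akh10}, as in the paper. But for the bound $276$ there is a genuine gap: you name "matching the thresholds" as the main obstacle and then do not supply the idea that closes it. The regimes you describe do not overlap. The computation of \cite{BR} covers $|\Disc(f)|\le 10^6$ (giving at most $8$ solutions there), while Theorem~1.2 of \cite{Akhsmallm} requires $|\Disc(f)|>D_1^{1/(1-\kappa)}$ with $D_1=(3.5)^{12}4^4\approx 8.65\times 10^8$, so it says nothing at all for $10^6<|\Disc(f)|\le D_1$, and as $|\Disc(f)|\to D_1^{+}$ the admissible $\kappa$ tends to $0$ and the bound $\lfloor 36+8/\kappa\rfloor$ blows up. There is no "intermediate range" result in \cite{Akhsmallm} to invoke, and taking "the worst case across the three regimes" is not a finite number without a further device.

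The missing idea in the paper is a sublattice covering trick. For a positive integer $r$ one covers $\Z^2$ by the $\psi(r)=r\prod_{p\mid r}(1+1/p)$ sublattices $L_i$ with $\Z^2/L_i\cong\Z/r\Z$; pulling $f$ back along a matrix $A_i$ of determinant $r$ mapping $\Z^2$ onto $L_i$ multiplies the discriminant by $r^{12}$, so each $f_{A_i}$ lies in the range where Theorem~1.2 of \cite{Akhsmallm} applies with a usable $\kappa$, at the cost of the multiplicative factor $\psi(r)$ in the solution count. Optimizing over $r$ subject to $r^{12}\cdot 10^6\ge D_1$ (only $r\in\{2,3,5\}$ need be checked, since $\psi(r)\lfloor 36+8/\kappa\rfloor\ge 352$ for $r=4$ or $r\ge 6$) gives $r=3$, $\kappa\approx 0.237$, and $\psi(3)\lfloor 36+8/\kappa\rfloor=4\cdot 69=276$ for all $|\Disc(f)|\ge 10^6$; combined with the bound $8$ from \cite{BR} below $10^6$, this yields $276$. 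Without this discriminant-boosting step (or some substitute covering the band $10^6<|\Disc(f)|\le D_1^{1/(1-\kappa)}$), your argument does not produce a finite constant, let alone $276$. The signature analysis, gap principles, and reduction theory you mention are internal to the cited papers and are not where the content of this particular proof lies.
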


The bound $276$ in Theorem \ref{main26} is obtained using the computational results of Bennett and Rechnitzer in their work in progress \cite{BR}. 
They show that if $|D| \leq 10^6$, where $D=\Disc(f)$, then the equation $f(x , y) = \pm 1$  has at most $8$ integer solutions. We thank Mike Bennett for sharing their results with us.

It is conjectured that the number of integer solutions to quartic Thue equations $f(x,y)=\pm1$ does not exceed $8$. 
This conjecture may be verified for forms with absolute discriminant larger than $10^6$ in the future by computational methods. We will list how such computations would improve the upper bounds in Theorem~\ref{main26} in a table at the end.

The upper bound $26$ of Theorem~\ref{main26} was established in \cite{Akh10},  assuming that the discriminant $D$ of the quartic form $f(x, y)$ satisfies  $|D| \geq D_0$ where $D_0$ is an effectively computable constant. 
Essentially no effort was made in \cite{Akh10} to make the constant $D_0$ explicit, as the main goal was to establish the best possible upper bound for the number of solutions of quartic Thue equations by excluding only a finite number of $\textrm{GL}_{2}(\mathbb{Z})$-equivalence classes of quartic forms.  From a less technical part of this work, namely \cite[Section~4]{Akh10},  it is clear  that $D_0$ must be taken at least $4^4 (3.5)^{4\times 6 \times 65}$. From more complicated parts,  where the theory of linear forms in logarithms is applied, the existence of such an effectively computable constant is clear, but no estimates can be easily  provided. 

In  \cite{Akhsmallm}, upper bounds for the number of primitive integer solutions of Thue equations $|f(x , y)| = m$, having any given degree $n \geq 3$, were obtained under the assumption that the positive integer $m$ is sufficiently small in terms of the absolute discriminant of the binary form $f(x , y)$. The following is \cite[Theorem~1.2]{Akhsmallm} stated for  $m=1$ and  $n = 4$. 

\begin{theorem}\label{maineq}
 Let $f(x , y) \in \mathbb{Z}[x , y]$ be an irreducible binary quartic form with discriminant~$D$.  Let
$D_1 = (3.5)^{12} 4^{ 4}\approx 8.651\times 10^8$, and 
 assume
 \smash{$|D| > D_1^{{1}/{(1-\kappa)}},$}
 where $0 < \kappa < 1$.
 Then the number of integer solutions to the equation $|f(x , y)| = 1$ is at most
\begin{equation}\label{numnberinkappa}
 \lfloor36 + {8}/{ \kappa}\rfloor.
 \end{equation}
 \end{theorem}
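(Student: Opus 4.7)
The plan is to follow the classical Thue-Siegel schema as specialized to quartic forms in \cite{Akhsmallm}. First I would partition the integer solutions to $|f(x,y)|=1$ according to which root of $f(x,1)$ they approximate. Indeed, a standard application of Mahler's estimate shows that any such solution $(x,y)$ satisfies $|x-\alpha^{(i)}y|\ll |D|^{-1/2}\max(|x|,|y|)^{-3}$ for exactly one of the four complex roots $\alpha^{(1)},\dots,\alpha^{(4)}$ of $f(x,1)$, yielding four ``clusters'' $\mathcal{S}_i$ that can be bounded separately.

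Within each cluster I would set up a gap principle. Two distinct large solutions $(x_1,y_1),(x_2,y_2)\in \mathcal{S}_i$ of heights $Y_1\le Y_2$ (where $Y_j=\max(|x_j|,|y_j|)$) satisfy
\[ 1 \le |x_1y_2-x_2y_1| \le Y_2\,|x_1-\alpha^{(i)}y_1|+Y_1\,|x_2-\alpha^{(i)}y_2| \ll |D|^{-1/2}\bigl(Y_1^{-3}Y_2+Y_2^{-3}Y_1\bigr), \]
where the leftmost inequality uses that $x_1y_2-x_2y_1$ is a nonzero integer. When $Y_2$ notably exceeds $Y_1$ the first summand on the right dominates, producing a lower bound $Y_2\gg |D|^{1/2}Y_1^3$. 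This cubic growth limits the number of large solutions in each cluster to a slowly growing function of the quotient of the largest admissible height over the threshold separating ``small'' from ``large'' solutions.

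The second ingredient is an explicit version of the hypergeometric method based on Pad\'e approximants to $(1-z)^{1/4}$. The hypothesis $|D|>D_1^{1/(1-\kappa)}$ with $D_1=(3.5)^{12}\cdot 4^4$ is calibrated so that the resulting upper bound on the largest possible solution, combined with the cubic gap principle, forces the number of large solutions in each cluster to be at most $2/\kappa$, for a total of $4\cdot(2/\kappa)=8/\kappa$ large solutions. The value $3.5$ is the sharpest currently known effective constant for quartic hypergeometric approximations, while the exponent $12$ and the factor $4^4$ arise respectively from the entries tracked in the Pad\'e construction and from the presence of four conjugate roots of the degree-four form. For small solutions (those below the threshold), the elementary combinatorial counting of \cite[Section~4]{Akh10} supplies the bound $36$, uniformly in $D$.

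Summing the two contributions yields $\lfloor 36+8/\kappa\rfloor$ as claimed. The principal obstacle lies in verifying that $D_1=(3.5)^{12}\cdot 4^4$ is indeed large enough for the cubic gap principle and the Pad\'e bound to mesh cleanly, producing exactly the exponent $1/\kappa$ in the large-solution count rather than something asymptotically worse. This demands careful numerical tracking of every constant through the Pad\'e-approximant construction for $(1-z)^{1/4}$ and through Siegel's two-variable method, a bookkeeping exercise carried out explicitly in \cite{Akhsmallm} that ensures the final dependence on $\kappa$ is linear in $1/\kappa$ with coefficient exactly $8$.
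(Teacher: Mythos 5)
The paper does not actually prove this theorem: it is imported verbatim from the literature. Immediately after the statement, the text notes that this is \cite[Theorem~1.2]{Akhsmallm} specialized to $m=1$ and $n=4$, with the cosmetic substitution $\epsilon=\kappa/6$ (which converts the original bound $\lfloor 36+\tfrac{4}{3\epsilon}\rfloor$, valid for $0<\epsilon<1/6$, into $\lfloor 36+8/\kappa\rfloor$ for $0<\kappa<1$). So the ``proof'' you should be comparing against is a citation plus a change of variables.

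Your proposal sketches the right general architecture for results of this type --- clustering solutions by the nearest root, a cubic gap principle from $|x_1y_2-x_2y_1|\ge 1$, the hypergeometric/Pad\'e method for large solutions, and a separate elementary count of small solutions --- and this is indeed, in broad strokes, how \cite{Akhsmallm} proceeds. But as a proof it has a genuine gap: every step that actually produces the specific numbers in the statement ($36$ small solutions, $8/\kappa$ large solutions, and the admissibility of $D_1=(3.5)^{12}4^4$) is asserted rather than derived, and you close by deferring exactly this verification to \cite{Akhsmallm} --- which is precisely what the paper does, minus the sketch. Several of the explanatory claims are also shaky: the constant $(3.5)^{12}4^4=(3.5)^{n(n-1)}n^n$ comes from the normalization of the discriminant and the gap/reduction estimates in \cite{Akhsmallm}, not from ``the sharpest effective constant for quartic hypergeometric approximations''; the per-cluster count of $2/\kappa$ large solutions is a reverse-engineered guess; and the bound $36$ for small solutions belongs to \cite{Akhsmallm}, whereas \cite[Section~4]{Akh10} is invoked in this paper only in connection with the constant $D_0$ for the bound $26$ in Theorem~\ref{main26}. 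In short, the outline is plausible but the proposal does not constitute an independent proof; it is a paraphrase of the citation that the paper itself relies on.
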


To simplify some of the expressions in Theorem~\ref{maineq},  we have replaced $0 < \epsilon < {1}/{6}$ in the statement of \cite[Theorem~1.2]{Akhsmallm} by ${\kappa}/{6}$, where $0 < \kappa < 1$.

\begin{corollary}\label{corofthm}
Let $f(x,  y) \in \mathbb{Z}$ be a binary quartic form with discriminant $D$. If $|D| \geq  2.71336712 \times 10^{80}$, then the equation $f(x , y) = \pm 1$ has at most $44$ integer solutions.
\end{corollary}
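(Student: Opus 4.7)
The plan is to apply Theorem~\ref{maineq} with a value of $\kappa$ just above $8/9$. For the upper bound $\lfloor 36 + 8/\kappa \rfloor$ to equal $44$, I need $36 + 8/\kappa < 45$, i.e.\ $\kappa > 8/9$, together with $\kappa < 1$ as required by the theorem; any $\kappa$ in the open interval $(8/9,\,1)$ works, and makes the bound equal to exactly $44$.

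The hypothesis of Theorem~\ref{maineq} then requires $|D| > D_1^{1/(1-\kappa)}$. The map $\kappa \mapsto D_1^{1/(1-\kappa)}$ is continuous and strictly increasing on $(0,1)$, with limiting value $D_1^9$ as $\kappa \to (8/9)^+$. Hence, provided $|D|$ strictly exceeds $D_1^9$, I can choose $\kappa \in (8/9,\,1)$ close enough to $8/9$ that $D_1^{1/(1-\kappa)} < |D|$, after which Theorem~\ref{maineq} gives at most $44$ solutions. The entire proof therefore reduces to verifying the numerical inequality $D_1^9 \le 2.71336712 \times 10^{80}$.

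Writing $D_1 = (3.5)^{12}\cdot 4^{4} = 7^{12}/2^{4}$, we have $D_1^9 = 7^{108}/2^{36}$. Using $\log_{10}(7) = 0.84509804\ldots$ and $\log_{10}(2) = 0.30102999\ldots$, a direct computation gives
\[
\log_{10}(D_1^9) \;=\; 108 \log_{10}(7) \,-\, 36 \log_{10}(2) \;=\; 80.4335085\ldots,
\]
so $D_1^9 \approx 2.713367 \times 10^{80}$, which is strictly less than $2.71336712 \times 10^{80}$. Combined with the reduction above, this gives the claim.

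The main delicate point is exactly this last numerical check: $D_1^9$ and the threshold $2.71336712 \times 10^{80}$ agree to roughly six significant figures, so enough digits of $\log_{10}(7)$ and $\log_{10}(2)$ must be retained to make the strict inequality rigorous. Everything else is a soft continuity argument and a direct invocation of Theorem~\ref{maineq}.
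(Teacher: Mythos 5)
Your proposal is correct and follows essentially the same route as the paper: apply Theorem~\ref{maineq} with $\kappa$ just above $8/9$ so that $\lfloor 36+8/\kappa\rfloor=44$, and check that the stated discriminant threshold clears $D_1^{1/(1-\kappa)}$. The only cosmetic difference is that the paper fixes the concrete value $\kappa=0.888888889$ (for which $D_1^{1/(1-\kappa)}$ equals the stated threshold $2.71336712\times 10^{80}$), whereas you let $\kappa\to(8/9)^{+}$ and verify $D_1^{9}<2.71336712\times 10^{80}$ directly; both computations check out.
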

\begin{proof}
Since for any $0< \kappa < 1$, we have  $\lfloor36 +  {8}/{\kappa} \rfloor\geq  44$, 
we choose a value of $\kappa$ for which $36 + {8}/{\kappa} < 45$, i.e.,   
$
\kappa > {8}/{9}.
$
Taking $\kappa = 0.888888889 > {8}/{9}$,
 we have 
$$
D_1^{{1}/{(1 -\kappa)}} = 2.71336712 \times 10^{80}.
$$
By Theorem \ref{maineq},  we conclude that $f(x , y) = \pm 1$ has at most $44$ integer solutions.
\end{proof}

The upper bounds in Theorem~\ref{maineq} are not quite as small as the bound $26$ in Theorem~\ref{main26},
but they can be applied to deduce bounds for the number of solutions of quartic equations $f(x , y) = \pm 1$ with smaller discriminant and with given discriminant ranges as well.  


We will deduce Theorem~\ref{main26} from the following more general version of Theorem~\ref{maineq}.

\begin{theorem}\label{gencount}
 Let $f(x , y) \in \mathbb{Z}[x , y]$ be an irreducible binary quartic form with discriminant~$D$.  Let
$D_1 = (3.5)^{12} 4^{ 4}\approx 8.651\times 10^8$, and 
let $r$ be a positive integer such that 
 \smash{$|r^{12}D| > D_1^{{1}/{(1-\kappa)}},$}
 where $0 < \kappa < 1$.
 Then the number of integer solutions to the equation $|f(x , y)| = 1$ is at most
\begin{equation}\label{numnberinkappa2}
 \psi(r)\lfloor36 + {8}/{ \kappa}\rfloor,
\end{equation}
 where $\psi$ is the Dedekind $\psi$-function defined by
 $$\psi(r)=r\prod_{p\mid r} (1+1/p).$$
 
\end{theorem}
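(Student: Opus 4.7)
The plan is to reduce Theorem \ref{gencount} to Theorem \ref{maineq} via a sublattice argument indexed by the projective line $\mathbb{P}^1(\mathbb{Z}/r\mathbb{Z})$, which has exactly $\psi(r)$ points (by a standard Chinese-remainder-theorem computation from the prime-power case $|\mathbb{P}^1(\mathbb{Z}/p^k\mathbb{Z})| = p^{k-1}(p+1)$). The key idea is to boost the discriminant by a factor of $r^{12}$ at the expense of partitioning the solution set into $\psi(r)$ residue classes, each of which contributes a separate quartic Thue inequality to which Theorem \ref{maineq} applies. Note that the case $r = 1$ recovers Theorem \ref{maineq} verbatim.

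I would begin by observing that any integer solution $(x,y)$ of $|f(x,y)|=1$ is primitive, since any common divisor $d$ of $x$ and $y$ would force $d^4 \mid 1$. Hence $(\bar x, \bar y) \in (\mathbb{Z}/r\mathbb{Z})^2$ is a primitive pair and determines a well-defined point $[x:y] \in \mathbb{P}^1(\mathbb{Z}/r\mathbb{Z})$. For each class $[a:b]$ I would lift to a primitive pair $(a,b) \in \mathbb{Z}^2$ with $\gcd(a,b)=1$, choose $\alpha,\beta \in \mathbb{Z}$ satisfying $a\beta - b\alpha = 1$, and set $M_{[a:b]} := \left(\begin{smallmatrix} a & r\alpha \\ b & r\beta \end{smallmatrix}\right)$, which has determinant $r$. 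The image $\Lambda_{[a:b]} := M_{[a:b]}\mathbb{Z}^2$ is then the index-$r$ sublattice $\{(x,y) \in \mathbb{Z}^2 : bx \equiv ay \pmod r\}$. The companion form $f_{[a:b]}(X,Y) := f\bigl(M_{[a:b]}(X,Y)^t\bigr)$ is an irreducible integral binary quartic form (irreducibility is preserved under a linear change of variables with nonzero determinant), with discriminant $\Disc(f_{[a:b]}) = (\det M_{[a:b]})^{4 \cdot 3}\,\Disc(f) = r^{12} D$ by the standard transformation law.

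The central bijection is between integer solutions $(X,Y)$ of $|f_{[a:b]}(X,Y)|=1$ and integer solutions $(x,y)$ of $|f(x,y)|=1$ whose projective class modulo $r$ equals $[a:b]$, given by $(x,y)^t = M_{[a:b]}(X,Y)^t$. In one direction the correspondence is immediate; in the other, primitivity of the image $(x,y)$ forces $\gcd(X,r)=1$, so $X$ is a unit modulo $r$ and $[x:y] = [Xa:Xb] = [a:b]$. Since $|r^{12}D| > D_1^{1/(1-\kappa)}$ by hypothesis, Theorem \ref{maineq} applies to each $f_{[a:b]}$ and bounds the number of its integer solutions by $\lfloor 36 + 8/\kappa \rfloor$; summing over the $\psi(r)$ projective classes yields the stated bound. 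The equivalence $(x,y) \sim (-x,-y)$ descends correctly, since $-1 \in (\mathbb{Z}/r\mathbb{Z})^{\times}$ preserves every projective class and negation is intertwined by $M_{[a:b]}$ between the two sides. The only point requiring real care is the bijection step, and in particular the extraction of $\gcd(X,r)=1$ from primitivity of $(x,y)$; beyond this, no further analytic input is needed since the argument is purely combinatorial and lattice-theoretic.
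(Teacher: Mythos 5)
Your proof is correct and follows essentially the same route as the paper: cover $\Z^2$ by the $\psi(r)$ index-$r$ sublattices with cyclic quotient (the paper asserts these directly, you index them by $\P^1(\Z/r\Z)$), pull $f$ back along a determinant-$r$ matrix for each to multiply the discriminant by $r^{12}$, and apply Theorem~\ref{maineq} to each of the resulting forms. The only difference is cosmetic: you upgrade the paper's covering argument to an exact partition of the primitive solutions by their class in $\P^1(\Z/r\Z)$, which is a slightly sharper bookkeeping but yields the same bound.
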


\begin{proof}
Let $f(x , y)$ be a binary form of degree $n$ and let 
$A=\bigl(\begin{smallmatrix}a & b\\ c&d\end{smallmatrix}\bigr)$
be a $2 \times 2$ integral  matrix. Define the binary form $f_{A}$ by
$$
f_{A}(x , y) := f(ax + by \ ,\  cx + dy).$$
Then
\begin{equation}\label{St6}
\Disc(f_{A}) = (\det\, A)^{n (n-1)} \Disc(f).
\end{equation}

Now let $r$ be a positive integer.  There exist $\psi(r)$ sublattices $L_1,L_2,\ldots,L_{\psi(r)}\subset \Z^2$ such that $\Z^2/L_i\cong \Z/r\Z$. Let $A_1,\ldots,A_{\psi(r)}$ be integer $2\times2$ matrices such that $A_i$ cor\-responds to a linear transformation mapping $\Z^2$ onto $L_i$; thus $|\det(A_i)|=r$ for each~$i$.

We have $\Z^2=\cup_{i=1}^{\psi(r)}L_i$. 
 Therefore, the number of solutions of $|f(x , y)|=  1$ is at most 
 $$N_{f_{1}} + N_{f_{2}} + \cdots + N_{f_{\psi(r)}},$$
  where
$N_{f_j}$ denotes the number of solutions to $f_{A_j} (x , y)= \pm1$.
By (\ref{St6}), we have
$$
\left| \Disc(f_{A_{j}})\right| = r^{n(n-1)} |\Disc(f)|.
$$
 This means that if $N$ is an upper bound for the number of solutions to 
 $g(x , y) = \pm 1,$
  where~$g$ ranges over all irreducible binary quartic forms 
  with absolute discriminant at least $C\, r^{n(n-1)}$
for some positive constant $C$,  then  
\begin{equation}\label{p+1N}
\psi(r)N
\end{equation} will be an upper bound for the number of solutions to $f(x , y) = \pm1$ for any binary $n$-ic form~$f$ with absolute discriminant at least $C$. 

Now let $f$ be a binary quartic form and $r$ a positive integer satisfying the conditions of the theorem. Applying Theorem~\ref{maineq} to each $f_{A_j}$, giving $N=\lfloor36+8/\kappa\rfloor$, and then taking the bound in (\ref{p+1N}), now yields Theorem~\ref{gencount}. 
\end{proof}

\vspace{.075in}
\noindent
{\bf Proof of Theorem~\ref{main26}:} 
Let $C>0$ be a constant such that the number of integer solutions to the quartic Thue equation $f(x , y) = \pm 1$ is at most 44 whenever $|\Disc(f)|< C$. We know the latter assumption to be true when $C=10^6$ due to the work of Bennett and Rechnitzer~\cite{BR}, and so we will be particularly interested in this value of $C$. 

For a positive integer $r$, let 
\begin{equation}\label{kappavalue}
\kappa(r):= 1 - \frac{\log D_{1}}{\log(r^{12}C)}-\epsilon
\end{equation}
for a very small $\epsilon>0$ (say $\epsilon=10^{-9}$ or even smaller). 
To obtain the best-possible upper bounds, using Theorem~\ref{gencount}, for the number of integer solutions of quartic Thue equations with no restriction on the size of the discriminant,
we simply need to minimize the function 
\begin{equation}\label{bound}
\psi(r) \lfloor36 + {8}/{\kappa(r)}\rfloor,
\end{equation}
where $r$ ranges over all positive integers with 
\begin{equation}\label{rconstraint}
r^{12}C \geq D_{1}.
\end{equation}
Indeed, for each such $r$, we have $r^{12}C > D_1^{1/(1-\kappa(r))}$ where $0<\kappa(r)<1$, so that Theorem~\ref{gencount} with $\kappa=\kappa(r)$ applies to all quartic Thue equations $f(x,y)=\pm1$ where $|\Disc(f)|\geq C$. 

We list  in Table 1 the optimal value of $r$, and the resulting bound (\ref{bound}), for various values of $C\geq 10^6$.
Since $ \kappa < 1$, we have $36+{8}/{\kappa} > 44$. Therefore, for any integer $r=4$ or $r \geq 6$, we have
$$\psi(r)\lfloor36+{8}/{\kappa}\rfloor \geq 352.
$$
This shows that for $C\geq 10^6$, we need only determine the value of $r\in\{1,2,3,5\}$ satisfying~\eqref{rconstraint}  that minimizes the value of \eqref{bound}. In particular, when $C=10^6$, of these latter values of $r$, only $r=2,3,5$ satisfy $r^{12}C \geq D_{1}$, and the choice of $r=3$ yields the optimal upper bound $\psi(r)\lfloor36 + {8}/{\kappa(r)}\rfloor=276$. This completes the proof of Theorem~\ref{main26}. {$\Box$ \vspace{2 ex}}

Due to expected advances in computation, we have also included in Table 1 the upper bounds for the number of integer solutions of quartic Thue equations that would be implied by Theorem \ref{maineq} if we knew that any quartic Thue equation $f(x , y) = \pm 1$, with 
$\left|\Disc(f)\right| < 10^k$ ($k>6$),
has at most $44$ integer solutions. 


\begin{table}[t]
\begin{center}
\begin{tabular}{ |c|c|c|c|c|c|c|} 
 \hline
$k$ &$r$ & $\kappa(r)$ & $\psi(r)\lfloor36+{8}/{\kappa(r)}\rfloor$\\
\hline\hline
$6$&$3$ &  $0.237$ & $276$ \\
\hline
$7$ & 3&  $0.297$ & $248$ \\
\hline
$8$ & $2$ & $0.230$ & $210$\\
\hline
$9$ & $2$ & $0.291$ & $189$\\
\hline
$10$ & $1$ & $0.106$ & $111$\\
\hline
$11$ & $1$ & $0.187$ & $78$\\
\hline
$12$ & $1$ & $0.255$ & $67$\\
\hline
$13$ & $1$ & $0.312$ & $61$\\
\hline
$14$ & $1$ & $0.361$ & $58$\\
\hline
$15$ & $1$ & $0.404$ & $55$\\
\hline
$20$ & $1$ & $0.553$ & $50$\\
\hline
$30$ & $1$ & $0.702$ & $47$\\
\hline
$45$ & $1$ & $0.801$ & $45$\\
\hline
$>80$ & $1$ &$>0.889$ & $44$\\
\hline
\end{tabular}
\caption{Optimal bounds for $C=10^k$.}
\end{center}
\vspace{-.2in}
\end{table}

These bounds are obtained from Theorem~\ref{gencount} as follows. 
For $k = 7$, we check $r = 2,3$, and we see that $r=3$ yields the better upper bound of~$248$. Since, for any $0< \kappa < 1$ and prime $r \geq 4$,  we have 
$$\psi(r)\lfloor36+{8}/{\kappa}\rfloor > 248,
$$
we need not consider larger values of $r$. Similarly, for $k = 8$, we need only check the values $r =2, 3$, and this time $r=2$ yields the better upper bound of $210$.  For $k=9$, we need to check $r=1$ in addition to $r= 2, 3$, since $r^{12}10^9\geq D_1$ also for $r=1$; we find that $r=2$ yields the best upper bound of 189. Finally, once $k\geq 10$, we see that the choice $r=1$ yields an upper bound
that is less than $3\times 44$, and hence no other $r$ can possibly yield a better bound.  Thus we may take $r=1$ for all $k\geq 10$.

%

If the absolute discriminant of a binary  form is larger than 
$$2.71336712 \times 10^{80},$$
then we obtain the upper bound $44$ already stated in Corollary \ref{corofthm}.

\end{appendices}

\end{document}